\renewcommand{\d}{{\mathrm d}}
\newcommand{\bcase}{\begin{case}}
\newcommand{\ecase}{\end{case}}
\newcommand{\bclaim}{\begin{claim}}
\newcommand{\eclaim}{\end{claim}}
\newcommand{\bstep}{\begin{step}}
\newcommand{\estep}{\end{step}}
\newcommand{\bhlem}{\begin{hlem}}
\newcommand{\ehlem}{\end{hlem}}
\newcommand{\bleer}{\begin{leer}}
\newcommand{\eleer}{\end{leer}}
\newcommand{\bde}{\begin{de}}
\newcommand{\ede}{\end{de}}
\newcommand{\ol}{\overline}
\newcommand{\bs}{\begin{satz}}
\newcommand{\es}{\end{satz}}
\newcommand{\btheo}{\begin{theo}}
\newcommand{\etheo}{\end{theo}}
\newcommand{\bfolg}{\begin{folg}}
\newcommand{\efolg}{\end{folg}}
\newcommand{\blem}{\begin{lem}}
\newcommand{\elem}{\end{lem}}
\newcommand{\bnote}{\begin{note}}
\newcommand{\enote}{\end{note}}
\newcommand{\bprf}{\begin{proof}}
\newcommand{\eprf}{\end{proof}}
\newcommand{\bd}{\begin{displaymath}}
\newcommand{\ed}{\end{displaymath}}
\newcommand{\be}{\begin{eqnarray*}}
\newcommand{\ee}{\end{eqnarray*}}
\newcommand{\eeqa}{\end{eqnarray}}
\newcommand{\beqa}{\begin{eqnarray}}
\newcommand{\bi}{\begin{itemize}}
\newcommand{\ei}{\end{itemize}}
\newcommand{\bnum}{\begin{enumerate}}
\newcommand{\enum}{\end{enumerate}}
\newcommand{\ve}{\varepsilon}
\newcommand{\beq}{\begin{equation}}
\newcommand{\eeq}{\end{equation}}
\newcommand{\einhalb}{\frac{1}{2}}
\newcommand{\rr}{\mathbb{R}}
\newcommand{\vf}{\varphi}
\newcommand{\earr}{\end{array}\]}
\newcommand{\barr}{\[\begin{array}}
\newcommand{\bvec}{\left(\begin{array}{c}}
\newcommand{\evec}{\end{array}\right)}
\newcommand{\w}{\omega}
\newcommand{\del}{\partial}
\newcommand{\bbem}{\begin{bem}}
\newcommand{\ebem}{\end{bem}}
\newcommand{\bbez}{\begin{bez}}
\newcommand{\ebez}{\end{bez}}
\newcommand{\bbsp}{\begin{bsp}}
\newcommand{\ebsp}{\end{bsp}}
\newcommand{\D}{\Delta}
\newcommand{\wt}{\widetilde}
\newcommand{\tem}{\widetilde{M}}
\newcommand{\tg}{\widetilde{g}}
\newcommand{\ro}{\mathsf{P}}
\newcommand{\cur}{{\cal R}} 
 \newcommand{\hook}{\raisebox{-0.35ex}{\makebox[0.6em][r]
{\scriptsize $-$}}\hspace{-0.15em}\raisebox{0.25ex}{\makebox[0.4em][l]{\tiny
 $|$}}}
\newcommand{\der}{{\mathrm d}}
\newcommand{\ups}{\Upsilon}
\newcommand{\dz}{\wedge}
\newcommand{\bbC}{\mathbb{C}}
\newcommand{\bbR}{\mathbb{R}}
\theoremstyle{definition}
\newtheorem{de}{Definition}
\newtheorem{bem}{Remark}
\newtheorem{bez}{Notation}
\newtheorem{bsp}{Example}
\theoremstyle{plain}
\newtheorem{lem}{Lemma}
\newtheorem{satz}{Proposition}
\newtheorem{folg}{Corollary}
\newtheorem{theo}{Theorem}
\newtheorem{theorem}{Theorem}
\newtheorem{lemma}{Lemma}
\newtheorem{proposition}{Proposition}
\newtheorem{corollary}{Corollary}
\theoremstyle{definition}
\newtheorem{definition}{Definition}
\theoremstyle{definition}
\begin{document}
\bibliographystyle{abbrv}
\title{Ambient metrics for $n$-dimensional $pp$-waves} 
\author{Thomas Leistner}\address{Department Mathematik, Universit\"at Hamburg, Bundesstra\ss e 55, D-20146 Hamburg, Germany} \email{leistner@math.uni-hamburg.de}
\author{Pawe\l~ Nurowski} \address{Instytut Fizyki Teoretycznej,
Uniwersytet Warszawski, ul. Ho\.za 69, 00-681 Warszawa, Poland}\email{nurowski@fuw.edu.pl}\thanks{This
  work was supported in part by the Polish Ministerstwo Nauki i
  Informatyzacji grant nr: 1 P03B 07529 and by the Sonderforschungsbereich 676 of the German Research Foundation}

\date{\today}
\begin{abstract}
We provide an explicit formula for the {\sc Fefferman-Graham}-ambient metric of an $n$-dimensional 
conformal $pp$-wave in those cases where it exists. In even dimensions
we calculate the obstruction explicitly. 
Furthermore, we describe all $4$-dimensional $pp$-waves that are Bach-flat, and give a large class of  Bach-flat examples which are conformally Cotton-flat, but not conformally Einstein.
Finally, as an application, we use the obtained ambient metric to show that 
even-dimensional  $pp$-waves have vanishing
critical $Q$-curvature. 
\\[.3cm]
{\em MSC:} 53C50; 53A30; 83C35; 81E30
\\
{\em Keywords:} $pp$-waves, Fefferman-Graham ambient metric, Bach-flat metrics, conformal holonomy, Q-curvature
\end{abstract}
\maketitle

\section{Introduction}
Plane fronted gravitational waves, called \emph{$pp$-waves}, are 
Lorentzian 4-manifolds  $(M,g)$ admitting a 
\emph{covariantly constant null} vector field $K$. 
In addition, their Ricci tensor $Ric$ satisfies 
\begin{equation}
Ric= \Phi~ \kappa \otimes \kappa,
\end{equation}
where $\kappa$ is the 1-form on $M$ defined by $\kappa:= K \hook
g$. Physicists require also that 
the function $\Phi$ is nonegative for a $pp$-wave. This is because  
$\Phi$, via the \emph{Einstein field equations}, is directly related
to the energy momentum tensor of its gravitational field. 

$pp$-waves are important in general relativity theory since they 
generalize the concept of a \emph{plane wave of classical
  electrodynamics} 
\cite{robinson59}, as well as because of the fact that 
every 4-dimensional spacetime has a \emph{special} $pp$-wave 
as a well defined limit \cite{penrose76}, the Penrose limit, as it is called. 

Higher dimensional generalizations of the 4-dimensional $pp$-waves
were studied in \cite{schimming74}, appeared in  Kalu\-za-Klein
theory \cite{crusciel-kowalski84,kowalski84}, and later in string
theory 
\cite{berenstein-maldacena-nastase02,fof-penrose,
gauntlett-hull02,cvetic-lu-pope02}. Their
property of possessing a covariantly constant null vector field $K$, 
implies that they have \emph{reduced Lorentzian holonomy} from the full orthogonal group $\mathrm{SO}(1,n-1)$ to the subgroup preserving the null  vector $K$. 
In fact, they can be characterised by having {\em Abelian} holonomy $\rr^{n-2}$ \cite{leistner01,leistner05c}. As such they admit many \emph{supersymmetries}, which is a desirable feature of any string theory. For example, the dimension of the space of parallel spinors on an $n$-dimensional $pp$-wave is at least
 half of the dimension of the spinor module, \cite{leistner01}.

In local coordinates $(x^i,u,r)_{i=1,\dots,n-2}$ in $\rr^n$ the $n$-dimensional $pp$-wave metric can be written as
$$
g=\sum_{i=1}^{n-2}(\der x^i)^2~+~ 2\der u ~\big(\der r+h\der u\big).
$$
Here $h$ is an arbitrary smooth real function of the first $(n-1)$
coordinates, $h=h(x^i,u)$. The covariantly 
constant null vector field is $K=\partial_r$. Another property of this
metric is that  it has vanishing scalar curvature. Hence, if it is \emph{Einstein} then it is \emph{Ricci
  flat}. This happens if and only
if $\Delta h=\sum_{i=1}^{n-2}\frac{\partial^2h}{\partial (x^i)^2}=0$.

\emph{Conformal classes} of $pp$-wave metrics have remarkable properties. 
One
of them is described by their discoverer H. W. Brinkmann already in
1925. In his seminal paper  \cite{brinkmann25} Brinkmann not only 
studied what was later called {\em Brinkmann wave}, namely Lorentzian 
manifolds with parallel null vector field, but he also showed the 
following \cite[Theorems IV and VIII]{brinkmann25}:  
{\em A $4$-dimensional Einstein manifold $(M,g)$  admits a 
function $\Upsilon$ such that 
the conformally rescaled metric ${\mathrm e}^{2\Upsilon}g$
is again Einstein, but \emph{not} homothetic to $g$,
 if and only if $(M,g)$ is a Ricci-flat $pp$-wave (or its counterpart in neutral signature\footnote{Be aware 
  that the coordinates in the relevant Section 4.2 of Brinkmann's paper \cite{brinkmann25}  have to be understood as complex and complex conjugate in order to obtain Lorentzian metrics. If they are considered as real coordinates the resulting metric has neutral signature.}).}  
 In this case, the rescaled metric is also Ricci-flat and the gradient of $\ups$ is a null vector.
 This occurs because the Weyl tensor $W
$ of a $pp$-wave is \emph{null} and \emph{aligned}  with $K$,
i.e. $K\hook W=0$,
  which makes these metrics not \emph{weakly generic} in the terminology of \cite{gover/nurowski04}.

In this paper we discuss another remarkable conformal property of 
$n$-dimensional $pp$-wave metrics, which is related to the 
\emph{ambient metric construction} of Fefferman and Graham 
\cite{fefferman/graham85,fefferman-graham07}. The ambient 
metric construction mimics the situation in the flat model 
of conformal geometry: Here the $n$-dimensional sphere equipped 
with the flat conformal structure can be viewed as the 
projectivisation  of the light-cone in $(n+2)$-dimensional Minkowski 
space. Letting the spheres wandering along the light cone recovers the 
metrics in the conformal class. 
For a conformal class $[g]$ in signature $(p,q)$ on an
$n=(p+q)$-dimensional manifold $M$ the \emph{ambient metric} is a 
metric $\tg$ of signature $(p+1,q+1)$ on the product of $M$ with 
two intervals, $\widetilde{M}:=(-\ve,\ve)\times M\times 
(1-\delta,1+\delta)$, $\ve>0$, $\delta>0$, 
that is \emph{compatible with the conformal structure} (for details see 
Definition \ref{ambientmet}) and, moreover, is \emph{Ricci flat}\footnote{Note that in some papers from the physics literature the term Fefferman-Graham metric  has a different meaning than ours. What physicists call Fefferman-Graham metric, e.g. in \cite{spindeletal00} or \cite{deharo-skenderis-solodukhin01}, is a related concept that Fefferman and Graham call the Poincar\'e-Einstein metric. How to obtain one from another is well known and we shall explain it in Section \ref{qcurv}.}. 
The Ricci-flat condition ensures that the  the ambient metric depends 
uniquely on the conformal structure and encodes all properties of 
the conformal class $[g]$ but has the downside that the ambient metric does not always exist. Starting with a formal power series
\begin{equation} 
\tg = 2 \left( t \der\rho + \rho \der t \right) \der t + t^2
\left( g + \sum_{k=1}^\infty \rho^k\mu_k \right)\label{afg}
\end{equation}
with $\rho\in (-\ve,\ve)$, $t\in (1-\delta,1+\delta)$ Fefferman and Graham showed that if $n$ is \emph{odd},
the Ricci-flatness of the ambient metric gives equations for 
$ \mu_1,\mu_2, \ldots $ that can be solved {\em in principle}, but 
the calculations have been carried out only for very special conformal 
classes, mainly those that are related to Einstein spaces 
\cite{leitner05, leistner05a,gover-leitner06}.   
If $n=2s$ is \emph{even}, there is a conformally invariant \emph{obstruction} to the existence of a Ricci-flat ambient metric,  called the 
\emph{Fefferman-Graham obstruction}.  This obstruction  is the 
 nonvanishing of the obstruction tensor $\mathcal O$, given by the term 
$\mu_s$. In $n=4$ this obstruction tensor is  
the \emph{Bach tensor} for $g$. In higher dimensions the \emph{leading term}
of $\mathcal O$  is $\triangle^{s}_g(g)$, but there are a lot of lower
order terms, which, again, are determined  {\em in principle}, but
whose calculation is very cumbersome.
 
One important feature of the ambient metric is that 
if the metric $g$ is \emph{real analytic} then its corresponding
ambient metric $\wt{g}$ (if it exists) is \emph{also real analytic}
\cite{fefferman/graham85,fefferman-graham07,kichenassamy04}. 
  Another feature
of the ambient metric is that if the conformal class of $g$ includes an Einstein metric $g_E$, then
the power series in the ambient metric $\tilde{g}_E$ \emph{truncates}
at $k=2$; in particular, for $n>3$, even the obstruction tensor
vanishes. In such case the metric is given as a \emph{second order 
polynomial} in each of the variables $t$ and $\rho$. 
However, if the metric $g$ is 
\emph{not conformally Einstein}, then, except for a few examples 
\cite{gover-leitner06,nurowski07}, no explicit formulae for 
$\mu_k$, $k>3$ are known. 

 In this context our main result is the following remarkable conformal 
property of $n$-dimensional $pp$-waves: for them \emph{all} the 
coefficients $\mu_k$ in the ambient metric, the obstruction tensor in even dimensions, and hence, the condition under which the ambient metric truncates at a given order  can be  
calculated \emph{explicitly}. In Section \ref{ppambient} we prove
 \begin{theorem}\label{theo1}
 Let $g=\sum_{i=1}^{n-2}(\der x^i)^2~+~ 2\der u ~\big(\der r+h\der u\big)$ be
 an $n$-dimensional $pp$-wave metric with a real analytic
 function $h=h(x^1,\ldots, x^{n-2},u)$. Then the Feffermann-Graham 
ambient metric for the conformal class $[g]$ exists if and only if $n$ is \emph{odd} and $h$ is
arbitrary, or if $n=2s$ is \emph{even} and $\D^s h=0$. In both cases 
the ambient metric is given by a formal power series
 $$
\wt{g}
= 2 \der \left(t \rho  \right) \der t
 + t^2 \left(g
+ \left( \sum_{k=1}^{\infty} \frac{\D^kh}{k ! p_k }\rho^k \right)\ 
\der u^2\right),$$
with $p_k:=\prod_{j=1}^{k}(2j-n)$ and $\D:=\sum_{i=1}^{n-2}\del_i^2$.
 In particular, if $n=2s$ is even, the obstruction tensor $\mathcal O$ is given by $\mathcal O =\D^s h~ \der u^2$.
\end{theorem}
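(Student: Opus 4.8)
The plan is to construct the ambient metric directly, already in Fefferman--Graham normal form, and then to invoke Fefferman--Graham's uniqueness (odd case) and existence (even case) theorems to identify it. One makes the ansatz
$$\wt g = 2\,\der(t\rho)\,\der t + t^2 g_\rho,\qquad g_\rho := g + \psi(x^i,u,\rho)\,\der u^2,$$
with $\psi$ an (a priori formal) power series in $\rho$ vanishing at $\rho=0$. The shape of the ansatz is dictated by the geometry of $pp$-waves: for each fixed $\rho$ the metric $g_\rho=\sum(\der x^i)^2+2\,\der u\,\der r+(2h+\psi)\,\der u^2$ is again a Brinkmann wave, with profile $h+\tfrac12\psi$, so its Ricci tensor is a multiple of $\der u^2$ and the parallel null line through $K=\partial_r$ survives; and since the conformal curvature of $g$ is aligned with $K$, no deformation transverse to $\der u^2$ is forced on us. The decisive simplification is algebraic: since $g_\rho^{uu}=0$ and $(g_\rho)'_{ij}=\psi'\,\delta^u_i\delta^u_j$ (a prime denotes $\partial_\rho$), every term of the Fefferman--Graham system that is quadratic in $(g_\rho)'$, together with the trace $g_\rho^{kl}(g_\rho)'_{kl}$, vanishes identically, and the terms that survive are all multiples of $\der u^2$; so the ansatz closes up and $\mathrm{Ric}(\wt g)=0$ --- which here may equally be verified by hand, most Christoffel symbols of $\wt g$ being zero --- collapses, after the standard reduction, to a single \emph{linear} equation in $\rho$ for $\psi$; written out,
$$2\rho\,\partial_\rho^2\psi-(n-2)\,\partial_\rho\psi-\D\psi-\D h=0.$$

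Writing $\psi=\sum_{k\ge1}a_k\rho^k$ with $a_k=a_k(x^i,u)$ and matching powers of $\rho$ yields $a_1=\tfrac{\D h}{2-n}$ from the $\rho^0$-term --- a check: $a_1\,\der u^2$ is the Schouten tensor $\mathsf P$ of $g$, which equals $\tfrac1{n-2}\mathrm{Ric}$ since $g$ is scalar-flat --- and the recursion $(k+1)\bigl(2(k+1)-n\bigr)\,a_{k+1}=\D a_k$, $k\ge1$. When $n$ is odd no factor $2j-n$ vanishes, the recursion unwinds unobstructed to $a_k=\D^kh/(k!\,p_k)$ with $p_k=\prod_{j=1}^k(2j-n)$, and real-analyticity of $h$ --- through Cauchy estimates $|\D^kh|\le CA^k(2k)!$ weighed against $|p_k|\gtrsim 2^k(k-\lceil n/2\rceil)!$ --- makes $\sum a_k\rho^k$ converge on a neighbourhood of $\rho=0$; hence $\wt g$ is a genuine real-analytic metric of signature $(p{+}1,q{+}1)$, and by uniqueness of the normal-form ambient metric it is the ambient metric of $[g]$. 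This part, together with the convergence estimate, I expect to be routine.

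The even case $n=2s$ carries the real content, and is the step I expect to be the main obstacle. There the recursion degenerates at the $\rho^{s-1}$-equation: since the coefficient $s(2s-n)$ that should determine $a_s$ vanishes, that equation reduces to the integrability condition $\D a_{s-1}=\D^s h/\bigl((s-1)!\,p_{s-1}\bigr)=0$, i.e.\ $\D^s h=0$. If $\D^s h\ne0$ this is precisely the nonvanishing of the Fefferman--Graham obstruction tensor $\mathcal O$; being conformally invariant, $\mathcal O$ then obstructs \emph{any} Ricci-flat ambient metric, which establishes the ``only if''. Tracking the normalisation of $\mathcal O$ --- in $n=4$ it is the Bach tensor, a multiple of $\D^2h\,\der u^2$ for a $pp$-wave --- pins it down as $\mathcal O=\D^s h\,\der u^2$. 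If instead $\D^s h=0$, then $\D^kh=0$ and $p_k=0$ for every $k\ge s$; the order-$(s-1)$ equation becomes vacuous, $a_s$ may be taken to be $0$, and then the subsequent equations force $a_k=0$ for all $k\ge s$, so the series truncates at order $s-1$ and the resulting $\wt g$ --- now polynomial in $\rho$ --- is Ricci-flat exactly, not merely formally. Reading the formally $0/0$ coefficients $\D^kh/(k!\,p_k)$, $k\ge s$, as $0$ then recovers both the displayed power series and the concluding identity $\mathcal O=\D^s h\,\der u^2$.
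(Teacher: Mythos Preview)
Your proposal follows essentially the same route as the paper: the same $\der u^2$-ansatz for the ambient metric in normal form, reduction of Ricci-flatness to a single linear PDE in $\rho$ for the unknown profile, solution by the power-series recursion $a_k=\D^kh/(k!\,p_k)$, and the appeal to Fefferman--Graham uniqueness (odd $n$) and to the obstruction tensor (even $n$). The only differences are that the paper derives the PDE by a hands-on Cartan-frame computation of $\mathrm{Ric}(\wt g)$ rather than by specialising the general Fefferman--Graham system, and that your ``check'' reveals a factor-of-$2$ slip --- since $\mu_1=2\mathsf P$, one should have $a_1\,\der u^2=2\mathsf P$, which traces back to a missing $2$ on the inhomogeneous term $\D h$ of your PDE (the paper's variable is $H=\tfrac12\psi$, satisfying $(2-n)H_\rho+2\rho H_{\rho\rho}-\D H=\D h$).
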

Thus if $n=2s$ is even, the ambient metric $\wt{g}$ is a
\emph{polynomial} of order $s-1$ in the variable $\rho$. If $n$ is
\emph{odd}, since the metric
$g$ is real analytic, Fefferman-Graham result guarantees that the
\emph{above metric} $\wt{g}$ is also \emph{real analytic}. This in 
particular means that the power series $\sum_{k=1}^{\infty} \frac{\D^kh}{k ! p_k
}\rho^k$ converges to a real analytic function in variable $\rho$.

Theorem \ref{theo1} provides us with a variety of examples of
conformal structures with {\em explicit} ambient metrics and which, 
in general, are \emph{not} conformally Einstein. For example, 
every polynomial $h$ in the $x^i$'s of order lower than $k$, with 
coefficients being functions of $u$, represents a 
$pp$-wave with ambient metric truncated at order lower than
$k/2$.  
In Section \ref{bachflat} we construct more general examples then
those defined by $h$ being polynomials in the $x^i$s. In particular, 
in dimension 
\emph{four} we find \emph{all} Bach-flat $4$-dimensional $pp$-waves and we
prove that most of them are {\em not conformally Einstein}. They are
defined by quite general functions $h$ and have ambient metrics which
are linear in variable $\rho$. It is interesting to note that these
$pp$-waves, although Bach-flat and conformal to Cotton-flat, are not
conformally Einstein.

Theorem \ref{theo1} implies also another interesting feature of the
$pp$-waves: their obstruction tensor $\mathcal O$ 
(in \emph{even} dimensions) involves only the terms of the highest
possible order in the derivatives of their metric; since \emph{all}
the lower order terms that are usually present in the obstruction
tensor are \emph{vanishing}, the $pp$-waves are, in a sense, 
the closest cousins of the conformally Einstein metrics.

Using the explicit form of the ambient metric and the main result of \cite{graham-juhl07}, in Section \ref{qcurv} we show that for even-dimensional  $pp$-waves the critical $Q$-curvature vanishes. This result is in correspondence with the fact that for a $pp$-wave all scalar invariants constructed from the curvature tensor vanish (for the proof in arbitrary dimension see \cite{CMPPPZ03}).
In the final Section \ref{holsec} we study the holonomy of the ambient metric of a $pp$-wave in relation to results in \cite{leistner05a}. We show that it is contained in the stabiliser of a totally null plane.

 \section{The Feffermann-Graham ambient metric}\label{ambient}
An important tool
 in order to construct invariants in conformal geometry is the
 so-called {\em Fefferman-Graham ambient metric} or {\em ambient
   space} (see \cite{fefferman/graham85} and
 \cite{fefferman-graham07}). Let $ (M,[g])$ be a  a smooth
 $n$-dimensional manifold $M$ with conformal structure $[g]$ of
 signature $(p,q) $ with the conformal frame bundle ${\mathcal
   P}^0$. It can also be characterised by a principle $\rr^+$-fibre
 bundle $\pi: {\mathcal Q}\rightarrow M$ defined as the ray sub-bundle in the bundle of metrics of signature $(p,q)$ given by metrics in the conformal class $c$. 
The action of $\rr^+$ on ${\mathcal Q}$ shall be denoted by $\vf$:
\begin{eqnarray*}
\vf(t,g_x)&=& t^2 g_x.\end{eqnarray*} 
From \cite{fefferman-graham07} we adopt the following notation.
\begin{definition}\label{ambientmet}
Let $(M,[g])$  be a conformal structure of signature $(p,q)$ over an $n$-dimensional manifold $M$, and $\pi:{\mathcal Q}\rightarrow M$ the corresponding ray bundle.
 A semi-Riemannian manifold $(\wt{M},\wt{g})$ of signature $(p+1,q+1)$ is called {\em pre-ambient space} if 
\bnum
\item there is a free $\rr^+$-action $\widetilde{\vf}$ on $\wt{M}$, and
\item an embedding $\iota:{\mathcal Q}\rightarrow \wt{M}$ is $\rr^+$-equivariant.
\item If $F$ is the fundamental vector field of $\wt{\vf}$, and ${\mathcal L}$ denotes the Lie derivative, then
${\mathcal L}_F\tg = 2 \tg$,
i.e. the metric $\tg$ is homogeneous of degree $2$ w.r.t. the $\rr^+$-action.
\item Any $g_x\in {\mathcal Q}$ satisifies the equality
$
(\iota^*\tg )_{g_x} = g_x \left(d\pi(.),d\pi(.)\right)$
 in $\odot^2 T^*_{g_x}{\mathcal Q}$.
\enum
A pre-ambient space is called {\em ambient space} if its 
\emph{Ricci curvature vanishes}.
\end{definition}

Under the assumption that the conformal structure is given by a real analytic metric,
in odd dimensions a Ricci-flat ambient metric always exists and is also real analytic. 

In even dimensions $n\ge 4$, the existence of a Ricci-flat ambient
metric is obstructed by the nonvanishing 
of the
obstruction tensor ${\mathcal O}$,
\cite[pp. 22]{fefferman-graham07}. This is a symmetric trace-free and
divergence-free $(2,0)$-tensor, which is conformally invariant of
weight $(2-n)$, i.e. if $\hat{g}=\mathrm{e}^{2\vf} g\in [g]$, then 
$\hat{\mathcal O}=\mathrm{e}^{(2-n)\vf}{\mathcal O}$. It is given by
$$
{\mathcal O}=\D^{n/2-2}_g\left( \D_g \ro - \nabla^2 J\right) +\text{lower order terms,}
$$ 
where $\ro=\frac{1}{n-2}\left( Ric - \frac{scal}{2(n-1)}g\right)$ is the Schouten tensor, $J$ its trace, and $\D_g$ denotes the Laplacian of $g\in [g]$. 
For a conformal class in even dimension that is given by a real analytic metric with vanishing obstruction tensor, the ambient metric exists and is also real analytic.

Fixing a metric $g$ in the conformal class, in \cite{fefferman/graham85,fefferman-graham07} it is shown that an ambient space near
$M$ can be written as
$$\tem= (-\epsilon,\epsilon)\times M \times (1-\delta, 1+\delta) $$
with the ambient metric
$$
\tg = 2 t \der\rho \der t+ 2\rho \der t^2 + t^2g(\rho),
$$
in which $g(\rho)$ is a one-paramemter family of of metrics on $M$ with $g(0)=g$. This is referred to as $\tg$ being in {\em normal form}. 
As the ambient metric is analytic, one can write  the family $g(\rho)$ as a power series in $\rho$,
$$ 
\tg = 2 t \der\rho \der t+ 2\rho \der t^2 + t^2
\left( g + \rho g^\prime +\einhalb \rho^2 g^{\prime\prime} + \frac{1}{6}\rho^3g^{\prime\prime\prime}+ \ldots \right),
$$
with $g^\prime=\del_\rho g(0)$. 
 We summarise the results for the ambient metric in 
\begin{theorem}[\cite{fefferman/graham85,fefferman-graham07} and \cite{kichenassamy04}]
Let $(M,[g])$ be a real analytic manifold $M$ of dimension $n\ge 2$ equipped with a  conformal structure defined by a real analytic semi-Riemannian metric $g$. 
\bnum
\item If $n$ is odd, or if $n$ is even with $\mathcal O=0$, then there exists an ambient space $(\wt{M},\wt{g})$ with real analytic Ricci-flat metric $\wt{g}$.
\item If $n$ is odd
the ambient space 
 is unique modulo diffeomorphisms that restrict to the 
identity along ${\mathcal Q}\subset \wt{M}$ and commute with $\wt{\vf}$. If $n$ is even with $\mathcal O=0$, the ambient space is unique, modulo the same set of diffeomorphisms and modulo terms of order $\ge n/2$  in $\rho$, where $\rho $ is the coordinate in the normal form of the ambient metric.
\enum
 \label{fg}
\end{theorem}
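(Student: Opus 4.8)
The plan is to reduce the whole problem to a degenerate initial value problem for a single one-parameter family of metrics on $M$, and then to analyse that problem first by formal power series and finally by a Fuchsian-type analytic existence theorem.

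\textbf{Step 1: Reduction to normal form.} First I would exploit the gauge freedom left by Definition \ref{ambientmet}. Conditions (1)--(4) pin down $\tg$ only up to the $\rr^+$-equivariant diffeomorphisms that restrict to the identity along $\mathcal{Q}$. I would show this freedom can always be used to bring any pre-ambient metric into the \emph{normal form} $\tg = 2t\,\der\rho\,\der t + 2\rho\,\der t^2 + t^2 g(\rho)$ with $g(0)=g$. This is a transverse normal-coordinate construction off $\mathcal{Q}$ (flowing along the analogue of geodesics normal to the cone to define $\rho$), while the homogeneity condition $\mathcal{L}_F\tg=2\tg$ forces the overall $t^2$-scaling and the $\der t$ cross terms. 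After this reduction, Ricci-flatness of $\tg$ becomes a system of equations for the single unknown $g(\rho)$.

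\textbf{Step 2: The governing equations.} Next I would compute the Ricci tensor of $\tg$ in normal form. The tangential $(ij)$-components give a second-order equation in $\rho$ of the schematic shape
$$\rho\, g_{ij}'' - \tfrac{n-2}{2}\, g_{ij}' - \tfrac12\, g^{kl}g_{kl}'\, g_{ij} + \mathrm{Ric}(g(\rho))_{ij} + Q_{ij}(g,g') = 0,$$
where $Q_{ij}$ is quadratic in $g'$ and $\mathrm{Ric}(g(\rho))$ is the Ricci tensor of the $n$-dimensional metric $g(\rho)$, while the mixed and $tt$-components yield two \emph{constraint} equations involving only $g$, $g'$ and one $\rho$-derivative. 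I would then observe, via the contracted second Bianchi identity for the would-be Ricci-flat ambient metric, that these constraints propagate in $\rho$: if they hold at $\rho=0$ and the tangential equation holds, they hold identically. Hence it suffices to solve the tangential equation with initial datum $g(0)=g$, for which the constraints at $\rho=0$ are automatic.

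\textbf{Step 3: Formal solution and the indicial obstruction.} Writing $g(\rho)=g+\sum_{k\ge1}\rho^k g^{(k)}$ and extracting the coefficient of $\rho^{k-1}$ in the tangential equation, the derivative terms contribute a scalar factor proportional to $k(2k-n)$ multiplying $g^{(k)}$, the remaining terms depending only on $g^{(0)},\dots,g^{(k-1)}$. When $n$ is \emph{odd} this factor never vanishes for $k\in\N$, so every $g^{(k)}$ is determined uniquely and recursively: the formal power series exists and is unique, establishing formal versions of (1) and (2). When $n=2s$ is \emph{even} the factor vanishes exactly at $k=s$; there the equation no longer determines $g^{(s)}$ but instead imposes a solvability condition on its trace-free part, which is precisely the vanishing of the obstruction tensor $\mathcal{O}$. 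If $\mathcal{O}=0$ the recursion proceeds with $g^{(s)}$ left free, and this undetermined term (with everything it generates at higher order) accounts exactly for the non-uniqueness ``modulo terms of order $\ge n/2$ in $\rho$'' in (2).

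\textbf{Step 4: Convergence.} Finally I would upgrade the formal solution to a genuine real-analytic one, which is where the real difficulty lies: the coefficient $\rho$ in front of $g''$ makes $\rho=0$ a characteristic point, so ordinary Cauchy--Kovalevskaya does not apply directly. I would recast the tangential equation as a \emph{Fuchsian} partial differential equation in $\rho$ and invoke the analytic existence theorem for such equations (Kichenassamy \cite{kichenassamy04}), whose indicial structure matches exactly the factor $k(2k-n)$ found above. This yields a convergent real-analytic $g(\rho)$ solving the tangential equation and, by Step 2, the full Ricci-flat ambient metric, completing (1); the uniqueness in (2) then follows by combining the gauge rigidity of Step 1 with the recursive uniqueness of Step 3. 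The principal obstacle is this convergence step, together with the careful verification in the even case that the order-$s$ solvability obstruction is precisely the conformally invariant tensor $\mathcal{O}$.
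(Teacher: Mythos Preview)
The paper does not supply a proof of this theorem at all: it is stated with attribution to \cite{fefferman/graham85,fefferman-graham07} and \cite{kichenassamy04} and then used as a black box. There is therefore no ``paper's own proof'' to compare your proposal against.

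That said, your sketch is an accurate outline of the argument as it appears in those references. The reduction to normal form, the derivation of the tangential Ricci equation with the indicial factor $k(2k-n)$, the identification of the obstruction at $k=s$ when $n=2s$, and the appeal to a Fuchsian-type analytic existence theorem for convergence are precisely the ingredients of the Fefferman--Graham/Kichenassamy proof. One small caution: in Step~3 the trace part of $g^{(s)}$ is also constrained (by the $\rho\rho$ or trace component of the Ricci equation), so it is only the trace-free part of $g^{(s)}$ that remains free; you say this correctly in one place but should be consistent. Also, the propagation-of-constraints argument in Step~2 requires a bit more care than a one-line invocation of Bianchi, since the ambient metric is only approximately Ricci-flat during the inductive construction; Fefferman and Graham handle this order by order rather than by a single global identity.
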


The Ricci-flat condition then determines symmetric $(2,0)$-tensors $\mu_k$ such that
 \begin{eqnarray*}
\tg &=& 2 t \der\rho \der t+ 2\rho \der t^2 + t^2
\left( g +  \sum_{k=1}^\infty \rho^k\mu_k \right).
\end{eqnarray*}
In \cite{fefferman-graham07} the first $\mu_k$ are determined  explicitely:
\begin{equation}\label{abc}
\begin{array}{rcl}(\mu_1)_{ab}&=& 2\ro_{ab}
\\
(n-4)(\mu_2)_{ab}&=&
-B_{ab} +(n-4)\ro_a{}^c\ro_{bc}
\\
3(n-4)(n-6)(\mu_3)_{ab}&=&
\D_g B_{ab} - 2W_{cabd}B^{cd} -4(n-6)\ro_{c(a}B_{b)}{}^c-4\ro_c{}^c B_{ab}
\\
&&{ }
  +4(n-4)\ro^{cd}\nabla_d C_{(ab)c}
 -2(n-4)C^c{}_a{}^d C_{dbc}
 \\
&&{ }
  +(n-4)C_a{}^{cd}C_{bcd}
  +2(n-4)\nabla_d\ro^c{}_c C_{(ab)}{}^d
  \\
&&{ }
-2(n-4)W_{cabd}\ro^c{}_e\ro^{ed},

\end{array}
\end{equation}
where $W_{abcd}$ is the Weyl tensor, $\ro_{ab}$ is the Schouten tensor, $C_{abc}:=\nabla_c\ro_{ab}-\nabla_b\ro_{ac}$ is the Cotton tensor, and $B_{ab}= \nabla_c C_{ab}^{\ \ c}- \ro_{cd} W^{c\ \ d}_{\ ab}$ is the Bach tensor.

\section{$pp$-waves and their curvature}
\label{ppwaves}

A $pp$-wave is a Lorentzian manifold with a parallel  null vector field $K$, i.e. $K\not=0$ and $g(K,K)=0$, whose curvature tensor satisfies the 
trace condition
\begin{equation}\label{ppcurv}R_{ab}^{\ \ ef}R_{efcd}=0.\end{equation}
If we denote by $\kappa$ the one-form given by $\kappa:=K\hook g$ the curvature condition (\ref{ppcurv}) is equivalent to each of the following, in which $[ab]$ denotes the skew symmetrisation w.r.t. $i$ and $j$, \cite{schimming74}: 
 \begin{enumerate}
\item \label{eq1}
$ \kappa_{[a}R_{bc]de}=0$;
\item\label{eq2}
there is a symmetric  $(2,0)$-tensor $\varrho$ with $K\hook\varrho=0$, such that 
$  R_{abcd} = \kappa_{[a}\varrho_{b][c}\kappa_{d]}$;
\item\label{eq3}
there is a function $\vf$, such that
$R_{\ ab}^{e\ \ f}R_{ecdf} =\vf \kappa_a\kappa_b\kappa_c\kappa_d$.
\end{enumerate}
The Ricci tensor of a $pp$-wave is given by 
$Ric=\Phi\  \kappa\otimes\kappa$,
for a smooth function $\Phi$. In dimension $n=4$ this is even equivalent to the curvature condition (\ref{ppcurv}).

In \cite{leistner05a} we gave another equivalent definition, not using coordinates or traces, but identifying a $pp$-wave as a Lorentzian manifold with parallel null vector field $K$, whose curvature satisfies
\begin{equation}
\label{ppeinfach1}
{\mathrm Im} \Big( \cur (U,V)_{| K^\bot}\Big)= \rr\cdot K \mbox{ for all }U,V\in TM.
\end{equation}
This equivalence allows for several generalisations \cite{leistner05c} and for an easy proof of another equivalence that is related to holonomy: An $n$-dimensional  Lorentzian manifold is a $pp$-wave if and only if its holonomy group is contained in the Abelian subgroup $\rr^{n-2}$ of the stabiliser in $\mathrm{SO}(1,n-1)$ of a null vector \cite{leistner01}.  

Locally, an $n$-dimensional  $pp$-wave admits coordinates $(x^1,\ldots , x^{n-2}, u,r)$ such that the metric is given by 
\be
g=\sum_{i=1}^{n-2}(\der x^i)^2~+~ 2\der u ~\big(\der r+h\der u\big),\label{pp3}
\ee
with $h$ being  a smooth real function  of the first $(n-1)$ coordinates, $h=h(x^i,u)$, \cite{schimming74}. In these coordinates the parallel null vector field $K$ is given by $\del_r$ and, up to symmetries, the only non-vanishing curvature terms of a $pp$-wave are
$$R(\del_i,\del_u,\del_j,\del_u)\ =\ \del_i\del_j h.$$
Here 
we use the obvious notation $\del_r:=\frac{\partial}{\partial r}$, $\del_u:=\frac{\partial}{\partial u}$ and $\del_i:=\frac{\partial}{\partial x^i}$, $i=1,\ldots , n-2$.
Hence, 
the function determining the Ricci-tensor is given by
$\Phi=-\D h$ with $\D h = \sum_{i=1}^{n-2}\del_i^2 h$, i.e.
\begin{equation}\label{ppric}
Ric = -\D h~ \der u^2.
\end{equation}
Hence, the Ricci-tensor is totally null, and the scalar curvature vanishes.
With this at hand, one can easily calculate the tensors related to the conformal geometry of a $pp$-wave. First, there is the Schouten-tensor
\begin{equation}\label{ppschouten}
\ro \ =\ \frac{1}{n-2}Ric\  =\  -\frac{\D h}{n-2}~ \der u^2.
\end{equation} 
Secondly, the Weyl tensor is given by
\begin{equation}\label{ppweyl}
W(\del_i,\del_u,\del_j,\del_u)\ =\  
\del_i\del_j h -\delta_{ij}\frac{\D h}{n-2},
\end{equation}
and for $n>3$ we obtain that $\del_i\del_j h =\delta_{ij}\frac{\D h}{n-2}$ as an equivalent condition on $h$ for $g$ being conformally flat.

Next, we calculate the Cotton tensor  $C$. As $\nabla\ro= - \frac{1}{n-2}\der (\D h)\otimes du^2$ one obtains that
\begin{equation}\label{ppcotton}C(\del_u,\del_i,\del_u)= - C(\del_u,\del_u,\del_i)=  \frac{\del_i \D h}{n-2}\end{equation}
are the only non-vanishing components of the Cotton tensor. Hence,
$\del_i \D h=0$ is
the condition on $h$ for $3$-dimensional conformally flat $pp$-waves.

Furthermore, we obtain the Bach tensor $B$,
 \begin{equation}\label{ppbach}
B= -\frac{\D^2h}{n-2}\  du^2.
\end{equation}
This enables us to calculate the next terms  in the ambient metric expansion in Eq.'s (\ref{abc}) beyond $\mu_1=2\ro = \frac{\D h}{n-2} du^2$, namely
\barr{rcccl}
\mu_2 &=&-\frac{1}{n-4} B &=&   \frac{\D^2h}{(n-2)(n-4)}  du^2,
\\
\mu_3&=& \frac{1}{2(n-4)(n-6)} \D B& =&
 \frac{\D^3h}{3(n-2)(n-2)(n-4)}  du^2.
 \earr
The very simple structure of $\mu_1$, $\mu_2$, and $\mu_3$ above, and
in particular the appearence of the consecutive powers of the
Laplacian, 
suggests that
this pattern may be also present in the next terms in the ambient metric
expansion. That this is really the case will be proven in the next section.

\section{The $pp$-wave ambient metric}\label{ppambient}
Looking at the very simple form of the $pp$-wave metric (\ref{pp3}) and 
the general formula for the ambient metrics (\ref{afg}), we 
\emph{guess} that the ambient metric for this $g$ is
\be\label{ppans}
\bar{g}=2\der (\rho t)\der t +t^2\Big(2\der u ~
\big(\der r+(h+H)\der u\big)~+~\sum_{i=1}^{n-2}(\der x^i)^2\Big),
\ee
where $H=H(\rho,x^i,u)$, and \be H(\rho,x^i,u)_{|\rho=0}=0.\label{hh}\ee

If we were able to find an analytic 
function $H$ satisfying (\ref{hh}) and for which the metric (\ref{ppans}) was \emph{Ricci flat} 
then, by the \emph{uniqueness} of the Fefferman-Graham Theorem \ref{fg}, we 
would conclude that $\bar{g}$ with this $H$ is the ambient metric 
for (\ref{pp3}). Thus to check our guess it is enough to calculate the
\emph{Ricci tensor} for (\ref{ppans}) and to check if its
\emph{vanishing} is possible for the function $H$ in the postulated
form (\ref{hh}).     

\begin{lemma}
The Ricci tensor of the metric (\ref{ppans}) is
$$Ric(\bar{g})=\Big( (2-n)H_\rho+2\rho H_{\rho\rho}-\triangle H-\triangle
h\Big)\der u^2.$$
Here $\triangle H=\sum_{i=1}^{n-2}\frac{\partial^2H}{\partial
  (x^i)^2}$, $H_\rho=\frac{\partial H}{\partial \rho}$, etc.
\proof{We start with a coframe 
\begin{equation}\label{ambcoframe}\theta^0=\der(\rho t),\ \theta^i=t\der x^i,\ \theta^{n-1}=t^2(\der
  r+(h+H)\der u),\ \theta^n=\der u,\ \theta^{n+1}=\der t,\end{equation}
in which the metric $\bar{g}$ reads:
$$\bar{g}=\bar{g}_{\mu\nu}\theta^\mu\theta^\nu=2\theta^0\theta^{n+1}+2
\theta^{n-1}\theta^n+\sum_{i=1}^{n-2}(\theta^i)^2,\quad\quad\mu,\nu=0,1,\dots,n+1.$$
It has the following differentials:
\barr{lcl}
\der\theta^0&=&0,\\
\der\theta^i&=&-t^{-1}\theta^i\dz\theta^{n+1},\quad\quad\quad\quad\forall
  i=1,\dots,n-2,\\
\der\theta^{n-1}&=&t H_\rho\theta^0\dz\theta^n+
t\sum\limits_{i=1}^{n-2}(h_i+H_i)\theta^i\dz\theta^n- 2t^{-1}\theta^{n-1}\dz\theta^{n+1}+\rho t
H_\rho \theta^n\dz\theta^{n+1},\\
\der\theta^n&=&0,\\
\der\theta^{n+1}&=&0.
\earr
In this coframe the Levi-Civita connection 1-forms, i.e. matrix-valued
1-forms satisfying 
$\der\theta^\mu+\Gamma^\mu_{~\nu}\dz\theta^\nu=0$,
$\Gamma_{\mu\nu}+\Gamma_{\nu\mu}=0$,
$\Gamma_{\mu\nu}=\bar{g}_{\mu\sigma}\Gamma^\sigma_{~\nu}$, are: 
\begin{equation}
\label{gammaij}\begin{array}{lcl}
\Gamma_{0n}&=&-t H_\rho\theta^n,\\
\Gamma_{in}&=&  -t(h_i+H_i)\theta^n,\\
\Gamma_{n-1~n} &=&t^{-1} \theta^{n+1}\\
\Gamma_{i\ n+1}&=&t^{-1}\theta^i,\\
\Gamma_{n-1~n+1} &=&t^{-1} \theta^{n}\\
\Gamma_{n\ n+1}&=& t^{-1}\theta^{n-1}-\rho t H_\rho\theta^n. 
\end{array}\end{equation}
Modulo the symmetry $\Gamma_{\mu\nu}=-\Gamma_{\nu\mu}$ all other
connection 1-forms are zero.  

The curvature 2-forms
$\Omega_{\mu\nu}=\der\Gamma_{\mu\nu}+\Gamma_{\mu\rho}\dz\Gamma^\rho_{~\nu}$,
have the following nonvanishing components:
\begin{eqnarray}
\Omega_{0n}&=&-H_{\rho\rho}\theta^0\dz\theta^n-\sum_{i=1}^{n-2}H_{i\rho}\theta^i\dz\theta^n- \rho
  H_{\rho\rho} \theta^n\dz\theta^{n+1},\nonumber\\
\Omega_{in}&=&
-H_{i\rho}\theta^0\dz\theta^n
-\sum_{k=1}^{n-2}(\delta_{ik}H_\rho+H_{ik}+h_{ik})\theta^k\dz\theta^n \label{rr}
 -\rho  H_{i\rho}\theta^n\dz\theta^{n+1},\\
\Omega_{nn+1}&=&
-\rho H_{\rho\rho}\theta^0\dz\theta^n
-\sum_{i=1}^{n-2}\rho H_{i\rho}\theta^i\dz\theta^n
  -\rho^2H_{\rho\rho} \theta^n\dz\theta^{n+1},\nonumber
\end{eqnarray}
together with the components that are implied by the symmetry
$\Omega_{\mu\nu}=-\Omega_{\nu\mu}$.

The Riemann tensor $R_{\mu\nu\rho\sigma}$, defined by
$\Omega_{\mu\nu}=\tfrac12
R_{\mu\nu\rho\sigma}\theta^\rho\dz\theta^\sigma$, can be read off from
the equations (\ref{rr}). Using it and the inverse of the metric
$g^{\mu\nu}$, $g_{\mu\rho}g^{\rho\nu}=
\delta_\mu^{~\nu}$, we calculate the Ricci tensor
$R_{\mu\nu}=g^{\rho\sigma}R_{\rho\mu\sigma\nu}$. It turns out that it  
has $R_{nn}=-2R_{0nnn+1}+\sum_{i=1}^{n-2}R_{inin}$
as its only nonvanishing component. Explicitly: 
$$R_{nn}=2\rho H_{\rho\rho}-(n-2)H_\rho-\triangle H-\triangle h.$$
This finishes the proof of the Lemma.}
\end{lemma}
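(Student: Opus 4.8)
The plan is to compute $\mathrm{Ric}(\bar g)$ by Cartan's method of moving frames, exploiting the fact that in a suitable null coframe the components of $\bar g$ are \emph{constant}. First I would pass to the coframe
$$\theta^0=\der(\rho t),\quad \theta^i=t\,\der x^i,\quad \theta^{n-1}=t^2(\der r+(h+H)\der u),\quad \theta^n=\der u,\quad \theta^{n+1}=\der t,$$
so that $\bar g=2\theta^0\theta^{n+1}+2\theta^{n-1}\theta^n+\sum_{i=1}^{n-2}(\theta^i)^2$ has the constant, degenerate, off-diagonal Gram matrix $\bar g_{0,n+1}=\bar g_{n-1,n}=1$, $\bar g_{ij}=\delta_{ij}$. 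The payoff of constancy is that the first structure equations $\der\theta^\mu+\Gamma^\mu{}_\nu\wedge\theta^\nu=0$ together with $\Gamma_{\mu\nu}=-\Gamma_{\nu\mu}$ (indices lowered by the constant $\bar g$) determine the Levi-Civita connection $1$-forms purely algebraically, once the $\der\theta^\mu$ are known.

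Second, I would expand the exterior derivatives. Only $\theta^i$ and $\theta^{n-1}$ fail to be closed: using $\der\rho=t^{-1}\theta^0-\rho t^{-1}\theta^{n+1}$ (which follows from $\theta^0=t\,\der\rho+\rho\,\theta^{n+1}$) and the fact that $H=H(\rho,x^i,u)$ and $h=h(x^i,u)$, one gets $\der\theta^i=-t^{-1}\theta^i\wedge\theta^{n+1}$ and a short expression for $\der\theta^{n-1}$ with coefficients built from $H_\rho$, $h_i+H_i$, $t^{\pm1}$ and $\rho$. Solving the structure equations then produces the connection forms; the one subtlety is that lowering the index $0$ produces the slot $n+1$ and lowering $n-1$ produces $n$ (and conversely), so one has to keep careful track of which forms are paired. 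The outcome is $\Gamma_{0n}=-tH_\rho\theta^n$, $\Gamma_{in}=-t(h_i+H_i)\theta^n$, $\Gamma_{n-1,n}=t^{-1}\theta^{n+1}$, $\Gamma_{i,n+1}=t^{-1}\theta^i$, $\Gamma_{n-1,n+1}=t^{-1}\theta^n$, $\Gamma_{n,n+1}=t^{-1}\theta^{n-1}-\rho tH_\rho\theta^n$, all others vanishing up to antisymmetry.

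Third, I would compute the curvature $2$-forms $\Omega_{\mu\nu}=\der\Gamma_{\mu\nu}+\Gamma_{\mu\lambda}\wedge\Gamma^\lambda{}_\nu$. Here the structure of the connection does most of the work: since $\theta^n\wedge\theta^n=0$ and the bulk of the connection forms are multiples of $\theta^n=\der u$, the quadratic terms $\Gamma\wedge\Gamma$ collapse dramatically, and one is left with curvature concentrated in $\Omega_{0n}$, $\Omega_{in}$, $\Omega_{n,n+1}$, each a combination of $\theta^0\wedge\theta^n$, $\theta^k\wedge\theta^n$ and $\theta^n\wedge\theta^{n+1}$ with coefficients $H_{\rho\rho}$, $H_{i\rho}$, $\delta_{ik}H_\rho+H_{ik}+h_{ik}$ and their $\rho$-weighted multiples. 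Reading off $R_{\mu\nu\rho\sigma}$ from $\Omega_{\mu\nu}=\tfrac12R_{\mu\nu\rho\sigma}\theta^\rho\wedge\theta^\sigma$ and contracting with the (likewise off-diagonal) inverse metric, $\bar g^{0,n+1}=\bar g^{n-1,n}=1$, one finds that every component of $R_{\mu\nu}=\bar g^{\rho\sigma}R_{\rho\mu\sigma\nu}$ vanishes except $R_{nn}=-2R_{0nn,n+1}+\sum_{i=1}^{n-2}R_{inin}=2\rho H_{\rho\rho}-(n-2)H_\rho-\triangle H-\triangle h$; since $\theta^n=\der u$, this is exactly the asserted formula.

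The main obstacle is purely the bookkeeping forced by the degenerate null frame: keeping the index-lowering swaps $0\leftrightarrow n+1$ and $n-1\leftrightarrow n$ consistent through the structure equations, the $\Gamma\wedge\Gamma$ terms and the final Ricci contraction, and genuinely verifying that \emph{all} the remaining Ricci components (not just the obviously vanishing ones) are zero. A convenient internal consistency check is homogeneity: because $\mathcal L_F\bar g=2\bar g$, the final expression for $R_{nn}$ must carry no power of $t$, and one can confirm that the $t$-weights of the connection and curvature forms conspire to make this so — any stray factor of $t$ would signal an error upstream.
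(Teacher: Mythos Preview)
Your proposal is correct and follows essentially the same approach as the paper: the identical null coframe, Cartan's structure equations, the same list of nonvanishing connection $1$-forms $\Gamma_{\mu\nu}$, the same curvature $2$-forms concentrated in $\Omega_{0n}$, $\Omega_{in}$, $\Omega_{n,n+1}$, and the same final contraction $R_{nn}=-2R_{0nn\,n+1}+\sum_i R_{inin}$. Your added remark on the $t$-homogeneity check is a nice diagnostic not present in the paper, but otherwise the arguments coincide step for step.
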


The Lemma shows that the metric $\bar{g}$ is Ricci flat if and only if
the function $H$ satisfies the following PDE:
\be
(2-n)H_\rho+2\rho H_{\rho\rho}-\triangle H=\triangle h.\label{uh}\ee
For $\bar{g}$ to be the ambient metric for (\ref{pp3}) we in addition
require the initial condition (\ref{hh}). By looking for the solution
of the initial value problem (\ref{uh}), (\ref{hh}) in the form of a
power series
\be
H=\sum_{k=0}^\infty a_k\rho^k,\label{so}\ee
we immediately get $a_0=0$ from the initial condition (\ref{hh}). Then 
inserting (\ref{so}) in (\ref{uh}), we easily arrive at 
\begin{proposition}
If $n=2s+1$, $s\geq 1$, then the initial value problem (\ref{uh}), (\ref{hh}) has a \emph{unique}
power series solution. It is given by:
\be
H=\sum_{k=1}^\infty \frac{\triangle^kh}{k!\prod_{i=1}^k(2i-n)}\rho^k.\label{sol}\ee
If $n=2s$ the power series solution exists only if
$\triangle^{s}h=0$. If this is the case, the solution is also unique and
given by the power series (\ref{sol}), which truncates to a
\emph{polynomial} of order $(s-1)$ in the variable $\rho$. 
\end{proposition}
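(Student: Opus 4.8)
The plan is to read (\ref{uh}) together with the normalisation (\ref{hh}) as an initial value problem in the single variable $\rho$ and to solve it by the power-series ansatz (\ref{so}), allowing the coefficients $a_k=a_k(x^1,\dots,x^{n-2},u)$ to depend on the remaining coordinates. The normalisation (\ref{hh}) forces $a_0=0$ at once, so only $a_1,a_2,\dots$ remain to be pinned down, and it suffices to substitute (\ref{so}) into (\ref{uh}) and match coefficients of powers of $\rho$.

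Carrying this out, I would collect for each $k\ge 1$ the coefficient of $\rho^{k-1}$ on the left-hand side of (\ref{uh}). The terms $(2-n)H_\rho$ and $2\rho H_{\rho\rho}$ each feed a multiple of $\rho^{k-1}$ out of $a_k$, with combined coefficient $(2-n)k+2k(k-1)=k(2k-n)$, while $-\triangle H$ contributes $-\triangle a_{k-1}\,\rho^{k-1}$ for $k\ge 2$. Comparing with the right-hand side $\triangle h$ (which carries only $\rho^0$) yields the single relation $(2-n)a_1=\triangle h$ at order $\rho^0$ and the recursion $k(2k-n)\,a_k=\triangle a_{k-1}$ for every $k\ge 2$.

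For $n=2s+1$ the integer $2k-n$ is odd, hence nonzero, for all $k\ge 1$, so the recursion determines each $a_k$ uniquely; iterating it from $a_1=\triangle h/(2-n)$ and using $\prod_{i=2}^{k}i=k!$ together with $(2-n)\prod_{i=2}^{k}(2i-n)=\prod_{i=1}^{k}(2i-n)$ gives the closed form (\ref{sol}), which is then the unique formal power-series solution. For $n=2s$ the only new feature is that the prefactor $k(2k-n)$ vanishes precisely at $k=s$. The recursion is unobstructed for $2\le k\le s-1$ and reproduces the coefficients of (\ref{sol}); in particular $a_{s-1}$ is a nonzero constant multiple of $\triangle^{s-1}h$, since $\prod_{i=1}^{s-1}(2i-n)$ is still nonzero. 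At $k=s$ the left-hand side of the recursion is identically zero, so a solution exists if and only if $\triangle a_{s-1}=0$, i.e. if and only if $\triangle^{s}h=0$. When this holds, $\triangle^{k}h=0$ for all $k\ge s$, the coefficients of (\ref{sol}) vanish from $k=s$ onwards, and the series truncates to a polynomial of degree $s-1$ in $\rho$; the coefficient $a_s$ left undetermined by the recursion — and with it all $a_k$, $k>s$ — is set to zero in (\ref{sol}), which is the canonical choice and corresponds exactly to the Fefferman-Graham ambiguity "modulo terms of order $\ge n/2$" recorded in Theorem \ref{fg}.

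The computation is elementary once the Lemma's reduction to the PDE (\ref{uh}) is available; the only part that genuinely needs care is the even-dimensional bookkeeping — locating the vanishing denominator at $k=s$, checking that the solvability condition it forces is precisely $\triangle^{s}h=0$ (which uses that the surviving products $\prod_{i=1}^{s-1}(2i-n)$ do not vanish), and matching the residual freedom in $a_s$ with the well-known non-uniqueness of the ambient metric in even dimensions.
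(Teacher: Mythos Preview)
Your proof is correct and follows exactly the approach the paper sketches: substitute the power-series ansatz into (\ref{uh}), read off the recursion $k(2k-n)\,a_k=\triangle a_{k-1}$ (with $(2-n)a_1=\triangle h$), and iterate. Your explicit handling of the even-dimensional case---locating the vanishing prefactor at $k=s$, identifying the obstruction $\triangle^s h=0$, and matching the residual freedom in $a_s$ with the Fefferman--Graham ambiguity of Theorem~\ref{fg}---spells out carefully what the paper leaves implicit in its one-line ``we easily arrive at'' argument.
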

This proposition proves our Theorem \ref{theo1} of the introduction. 
Note that the solution we found is a solution to Equation 3.17 in \cite{fefferman-graham07} that was derived for the Taylor expansion of the ambient metric, here specified for a $pp$-wave.
In particular, for $n=2s$ the obstruction tensor of an $n$-dimensional $pp$-wave is given by
\[\mathcal O\ =\ \D^{s}h ~ du^2.\]
With this result at hand, every polynomial $h$ in the $x^i$'s of order lower than $2k$,  with 
coefficients being functions of $u$, gives an    example of a 
$pp$-wave for which the ambient metric truncates to a polynomial of order lower than 
$k$. This gives plenty of examples of explicit ambient metrics, also  in even dimensions. Moreover, choosing $h$ properly, one gets examples for which the 
conformal class does not contain an Einstein metric. This will be the aim of Section \ref{bachflat}. But first we address the issue of convergence of $H$ in odd dimensions.

\section{Convergence in three dimensions}

In odd dimensions the solution to the Ricci-flat equation, $H$  in (\ref{sol}),  may be given by an infinite series. Since  $H$ contains only natural powers of $\rho$, general arguments as in \cite{fefferman-graham07} ensure that $H$ converges for an analytic function $h$ and is analytic as well, \cite{callrobin}. 
Here we give a simple argument that proves convergence for $n=3$:
\begin{proposition}
Let $h$ be a function on $\bbC\times\bbR$ of variables $(z,u)$ which is an 
entire holomorphic function in $z=x+iy\in\bbC$, is continuous in
$u\in\bbR$, 
and is real for $z=x\in\bbR$. Then the series   
\be
H(x,u,\rho)=\sum_{k=1}^\infty \frac{(\triangle^kh)(x,u)}
{k!\prod_{i=1}^k(2i-3)}\rho^k\label{so3}\ee
converges uniformly on compact subsets of $\bbR^3$.

\proof{Let $R>1$ be a real number and let $C=\sup\{|h(z,u)|\}$ over all
values of $(z,u)$ such that $|z-x|\leq (R+2\epsilon)$, $|u|\leq\nu>0$,
and $|x|\leq\epsilon>0$. Then by the Cauchy-Schwarz inequality, the 
$k$th derivative of $h$ at every real point 
$(x,u)\in[-\epsilon,\epsilon]\times[-\nu,\nu]$ satisfies 
$|h^{(k)}(x,u)|\leq \frac{Ck!}{R^k}.$ This provides the following
estimate for the values of the powers of the Laplacian 
$\triangle^k h=\tfrac{d^{2k}h}{dz^{2k}}$:
\be
\forall(x,u)\in[-\epsilon,\epsilon]\times[-\nu,\nu]\quad{\mathrm we~ have}\quad|(\triangle^kh)(x,u)|\leq \frac{C(2k)!}{R^{2k}}.\label{so4}\ee  
Now we rewrite (\ref{so3}) to the equivalent form 
$$H=\rho\triangle
h- \sum_{k=1}^\infty\frac{\triangle^{k+1}h}{(k+1)!\cdot 1\cdot
  3\cdot\dots\cdot (2k-1)}\rho^{k+1}.$$
To show that $H$ converges it is enough to show the convergence of the
power series above. This can be done by using the estimate (\ref{so4}):
\begin{eqnarray*}
&&|\sum_{k=1}^\infty\frac{\triangle^{k+1}h}{(k+1)!\cdot 1\cdot
  3\cdot\dots\cdot (2k-1)}\rho^{k+1}|\leq C\sum_{k=1}^\infty\frac{(2k+2)!}{(k+1)!\cdot 1\cdot
  3\cdot\dots\cdot (2k-1)}\left(\frac{|\rho|}{R^2}\right)^{k+1}\\
&&=C\sum_{k=1}^\infty\frac{(2\cdot 4\cdot\dots\cdot 2k)\cdot (2k+1)(2k+2)}{(k+1)!}\left(\frac{|\rho|}{R^2}\right)^{k+1}=C\sum_{k=1}^\infty b_k\left(\frac{|\rho|}{R^2}\right)^{k+1}.\end{eqnarray*}
Since 
$$\frac{|b_{k+1}|}{|b_k|}=\frac{2(k+1)(2k+3)(2k+4)}{(k+2)(2k+1)(2k+2)}\longrightarrow
2\quad{\mathrm as}\quad k\to\infty,$$
then this series converges for $|\rho|\leq\frac{R^2}{2}.$ This
finishes the proof.}
\end{proposition}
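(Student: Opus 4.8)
The statement to establish is the convergence of the series
$$
H(x,u,\rho)=\sum_{k=1}^\infty \frac{(\triangle^kh)(x,u)}{k!\prod_{i=1}^k(2i-3)}\rho^k
$$
uniformly on compact subsets of $\bbR^3$, where $\triangle=\d^2/\d z^2$ acts on the entire holomorphic $z$-dependence of $h$. The plan is to fix a compact set, bound it inside a box $[-\epsilon,\epsilon]\times[-\nu,\nu]$ in the $(x,u)$-variables, and use holomorphy in $z$ to control the growth of $\triangle^k h=h^{(2k)}$ by Cauchy's estimates: choosing $R>1$ and $C=\sup|h|$ over the polydisc $|z-x|\le R+2\epsilon$, $|u|\le\nu$, $|x|\le\epsilon$, one gets $|(\triangle^k h)(x,u)|\le C(2k)!/R^{2k}$ uniformly on the box.

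The second step is to feed this estimate into the series. Since $\prod_{i=1}^k(2i-3)=-1\cdot1\cdot3\cdots(2k-3)$, it is cleaner to split off the first term and write $H=\rho\,\triangle h-\sum_{k\ge1}\frac{\triangle^{k+1}h}{(k+1)!\,\cdot 1\cdot3\cdots(2k-1)}\rho^{k+1}$, so that all denominators are products of positive odd integers. Applying the derivative bound termwise reduces the question to the convergence of a scalar power series $\sum_k b_k(|\rho|/R^2)^{k+1}$ with
$$
b_k=\frac{(2\cdot4\cdots2k)(2k+1)(2k+2)}{(k+1)!},
$$
obtained by cancelling $1\cdot3\cdots(2k-1)$ against part of $(2k+2)!$. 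The final step is the ratio test: $|b_{k+1}|/|b_k|\to2$, so the scalar series converges whenever $|\rho|<R^2/2$, and since $R$ can be taken as large as we like, $H$ converges (uniformly and absolutely on the box, since the majorant is a convergent geometric-type series) for every $\rho$.

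The main point requiring care — the only genuine obstacle — is the passage from holomorphy of $h$ in $z$ to a \emph{uniform} Cauchy bound valid simultaneously at all real base points $x$ in the compact set: one must choose the polydisc radius $R+2\epsilon$ large enough that its translates by every $|x|\le\epsilon$ stay inside the common domain of holomorphy, so that $C$ is finite and independent of the point. Everything after that is the ratio-test bookkeeping above. Since the bound on the $k$th term is independent of $(x,u)$ in the box, the convergence is automatically uniform there, which gives uniform convergence on the original compact set and completes the proof.
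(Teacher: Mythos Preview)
Your proposal is correct and follows essentially the same approach as the paper's proof: Cauchy estimates on a polydisc to bound $|\triangle^k h|\le C(2k)!/R^{2k}$, the same splitting off of the $k=1$ term to make all denominators positive, and the same ratio-test computation yielding radius $R^2/2$. Your version is in fact slightly cleaner, since you correctly name the estimate (the paper writes ``Cauchy--Schwarz'' where ``Cauchy's estimates'' is meant) and you explicitly note that $R$ is arbitrary so the series converges for all $\rho$, a point the paper leaves implicit.
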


\section{Bach flat metrics that are not conformally Einstein}
\label{bachflat}
With Eq. (\ref{ppbach}) it is obvious how to obtain Bach-flat $pp$-waves. It is more difficult to find those that are not conformally Einstein. 
In this section we want to give examples of $4$-dimensional $pp$-waves
that are both, Bach flat, and not conformal to Einstein. But first we have to review some necessary condition of being conformal to Einstein given in  \cite{gover/nurowski04} for any dimension.

From the formulae for the transformation of the Schouten tensor under conformal changes of the metric one obtains that a metric is conformal to an Einstein metric if and only if there exists a scaling function $\ups$ such that 
\begin{equation}
\label{ppconfeinstein}
\ro - \nabla \der \ups +(\der\ups)^2 \text{ is pure trace.}
\end{equation}
In the following we write $Y$ for the gradient of $\ups$. 
In \cite[Proposition 2.1]{gover/nurowski04} the following 
necessary
conditions for the metric to be conformal to Einstein were derived from Eq. (\ref{ppconfeinstein}):
\begin{eqnarray}
\label{212}
C+W(Y,.,.,.)&=&0\\
 B+ (n-4) W(Y,.,.,Y) &=&0\label{213}.
\end{eqnarray}
Note that the first condition is satisfied for a gradient $Y$ if and only if 
the metric is conformally equivalent to a metric with vanishing  Cotton tensor, i.e. if it is {\em conformally Cotton-flat}.
We further mention that
the property of being conformally Cotton-flat is also neccessary for the
metric to be conformally Einstein \cite{gover/nurowski04}.

For a $pp$-wave conditions (\ref{212}) and (\ref{213}) are equivalent to the following:
\bs\label{prop21}
If the $pp$-wave (\ref{pp3}) is conformally Einstein but not conformally flat and $n>3$, then there is a vector field  $Y$ on $M$, whose components 
 $Y^i:=\der x^i(Y)$, $i=1,\ldots , n-2$, and $Y^{n-1}:=\der u(Y)$ satisfy the equations
\begin{eqnarray}
\del_i \D h - Y^i  \D h +(n-2)
\sum_{k=1}^{n-2}
 Y^k  \del_k\del_i h&=&0\label{pp212}
\\
\D^2 h  - (n-4)  \D h \
\sum_{k=1}^{n-2} \left(Y^k \right)^2
+ (n-2)(n-4)
\sum_{k,l=1}^{n-2} 
Y^k Y^l   \del_k\del_l h &=& 0\label{pp213}
\end{eqnarray}
for $i=1,\ldots , n-2$, and 
\begin{equation}
Y^{n-1}=0.\label{duy}
\end{equation}
\es

\bprf
Writing $Y =  Y^k\del_k +Y^{n-1} \del_u+ \d r(Y) \del_r$, Eq. (\ref{212}) and the formulae in Section \ref{ppwaves} 
give
\begin{eqnarray*}
0&=&  Y^{n-1} W(\del_u,\del_i,\del_u,\del_j)\\
0&=&
\frac{\del_i \D h}{n-2}+ Y^k\left( \del_k\del_i h -\delta_{ki} \frac{ \D h}{n-2}\right).
\end{eqnarray*}
These, when  $n>3$, imply both, $Y^{n-1}=0$ and Eq. (\ref{pp212}).
Equation (\ref{213})  gives that
\begin{eqnarray*}
0&=&
-\frac{\D^2 h}{n-2} -(n-4)Y^kY^l \left(\del_k\del_l h- \delta_{kl}\frac{\D h }{n-2}\right)
\end{eqnarray*}
which implies Eq. (\ref{pp213}).
\eprf
Writing $Y$ as the gradient of $\Upsilon$, 
$$Y
=
 \sum_{k=1}^{n-2}\del_k\ups \del_k+ \del_r \ups \del_u  +\left( \del_u \ups-h\del_r\ups\right)\del_r.$$
 the proposition implies that $\der u(Y) =\del_r \ups =0$. Hence,
$$
\del_r\left(\der r(Y)\right) = \del_r \left( \del_u \ups -h\del_r\ups\right)=0,
$$
and we obtain
\begin{corollary}\label{prop21cor}
Let $g$ be a  $pp$-wave that is conformally Einstein but not conformally flat in dimension $n>3$, and let $Y$ be the gradient of the scaling function $\ups$ satisfying Eq. (\ref{ppconfeinstein}). Then the function $Y^n=\der r (Y)$ does not depend on the $r$-variable.
\end{corollary}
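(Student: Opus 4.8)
The plan is to prove Corollary \ref{prop21cor} as a direct consequence of Proposition \ref{prop21}. First I would recall that the scaling function $\ups$ is a function on the $pp$-wave $(M,g)$ whose gradient $Y$ must, by Proposition \ref{prop21}, satisfy $Y^{n-1}=\der u(Y)=0$; in the adapted coordinates $(x^1,\dots,x^{n-2},u,r)$ this means that $\del_r\ups=\der u(Y)=0$, so $\ups$ does not depend on $r$. The key computation is to expand $Y=\grad\ups$ in the coframe dual to $(\del_1,\dots,\del_{n-2},\del_u,\del_r)$ using the inverse metric of \eqref{pp3}, obtaining
$$Y=\sum_{k=1}^{n-2}\del_k\ups\,\del_k+\del_r\ups\,\del_u+\big(\del_u\ups-h\,\del_r\ups\big)\del_r,$$
so that the $\del_r$-component of $Y$ is $Y^n=\der r(Y)=\del_u\ups-h\,\del_r\ups$.

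The main step is then to differentiate $Y^n$ with respect to $r$ and use $\del_r\ups=0$. Since $\ups$ is independent of $r$, both $\del_u\ups$ and $\del_r\ups$ are independent of $r$, hence
$$\del_r Y^n=\del_r\big(\del_u\ups-h\,\del_r\ups\big)=\del_r\del_u\ups-(\del_r h)\,\del_r\ups-h\,\del_r\del_r\ups=0,$$
where we also use that $h=h(x^i,u)$ is independent of $r$ so $\del_r h=0$, and that $\del_r\del_u\ups=\del_u(\del_r\ups)=0$ and $\del_r\del_r\ups=0$ because $\del_r\ups=0$. This is exactly the assertion that $Y^n=\der r(Y)$ does not depend on the $r$-variable, completing the proof.

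There is no real obstacle here: the content is entirely packaged in Proposition \ref{prop21}, specifically the vanishing $\der u(Y)=0$ from \eqref{duy}, together with the explicit coordinate form of the gradient and the fact that $h$ is $r$-independent. The only point requiring a modicum of care is the bookkeeping of which functions depend on which coordinates — namely that $\ups$ being $r$-independent forces all its partial derivatives appearing in $Y^n$ to be $r$-independent as well — but this is immediate once the gradient is written out. I would therefore present the argument in the two short paragraphs already sketched above, essentially as it appears in the excerpt between the statement of Proposition \ref{prop21} and Corollary \ref{prop21cor}.
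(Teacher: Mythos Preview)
Your proof is correct and follows essentially the same route as the paper: write out the gradient of $\ups$ explicitly in the adapted coordinates, use $\der u(Y)=\del_r\ups=0$ from Proposition \ref{prop21}, and differentiate $Y^n=\del_u\ups-h\,\del_r\ups$ with respect to $r$ to obtain zero. There is nothing to add.
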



\bbsp \label{ex3dim}
For $n=3$ a third order polynomial $h$ in $x$ with coefficients being functions of $u$ defines a $pp$-wave with non-vanishing Cotton tensor. Hence, it is not conformally flat and therefore not conformally Einstein.
\ebsp
\bbsp\label{exmoredim}
Set $M=\rr^n$ and  $h= x_1^4+ \ldots +x^4_{n-2}$. Then, 
$\del_i\del_j h \not= \delta_{ij}\frac{\D h}{n-2}$ on open sets in $M$ and hence, $g$ is not conformally flat. On the other hand,  Eq. (\ref{pp213}) can never be satisfied in $0\in M$, because here all second order derivatives of $h$ vanish, but  $\D^2h = 24(n-2)$. Thus, the $pp$-wave defined by $h= x_1^4+ \ldots +x^4_{n-2}$ is not conformally Einstein.
\ebsp

Now we turn to  dimension $n=2s=4$. Here the formula (\ref{sol}) makes sense only if
$\triangle^2h=0$. In such case the formula truncates to
$H=\frac12\rho\triangle h$. 
Thus it is clear that for the
4-dimensional $pp$-waves the Fefferman-Graham obstruction is
\emph{precisely} $\triangle^2 h$, which is a multiple of the Bach tensor, and does not involve any lower order 
terms in the derivatives of the metric functions. In order to write down all
such metrics, it is convenient to pass to the \emph{complex notation}
by introducing coordinates $z=\frac{x^1+ix^2}{\sqrt{2}}$,
$\bar{z}=\frac{x^1-ix^2}{\sqrt{2}}$. In this notation the \emph{most
general} 4-dimensional $pp$-wave metric \emph{satisfying} $\triangle^2h=0$ is
given by
$$g_4=2\der u\Big(\der
r+\left(\bar{z}\alpha+z\bar{\alpha}+\beta+\bar{\beta}\right)\der
u\Big)+2\der z\der\bar{z}.$$
Here $\alpha=\alpha(z,u)$, $\beta=\beta(z,u)$ are \emph{holomorphic} functions of
$z$. This metric is \emph{Bach-flat}, and in \emph{some} cases, 
such as when 
$a_z+\bar{\alpha}_{\bar{z}}={\mathrm const}$, is conformal to an Einstein
metric. 
Its ambient metric  is given by 
$$\tilde{g}_4=2\der(\rho t)\der t+t^2\Big(2\der u[\der
r+\left(\bar{z}\alpha+z\bar{\alpha}+\beta+\bar{\beta}-\rho(a_z+\bar{\alpha}_{\bar{z}})\right)\der
u]+2\der z\der\bar{z}\Big),$$
and by construction is \emph{Ricci flat}. We get
\bs
A $4$-dimensional $pp$-wave $g_4$ is Bach flat if and only if 
$$g_4=2\der u\Big(\der
r+\left(\bar{z}\alpha+z\bar{\alpha}+\beta+\bar{\beta}\right)\der
u\Big)+2\der z\der\bar{z},$$
with $\alpha=\alpha(z,u)$, $\beta=\beta(z,u)$ functions of a complex variable $z$ and a real variable $u$ which are holomorphic in
$z$. 
\es
In general,  this Bach-flat metric is \emph{not}
conformally Einstein:
\begin{theorem}
A $4$-dimensional Bach-flat $pp$-wave 
\begin{equation}
\label{confcott}
g_4=2\der u\Big(\der
r+\left(\bar{z}\alpha+z\bar{\alpha}\right)\der
u\Big)+2\der z\der\bar{z}
\end{equation}  with  $\beta\equiv 0$ is conformally equivalent to a metric with vanishing Cotton tensor. Moreover, the following three properties are equivalent:
\bnum
\item  $\del_z^2\alpha\equiv 0$,
\item $g_4$ is conformally flat,
\item $g_4$ is conformally Einstein.
\enum
In particular, any such metric with $\del_z^2\alpha\not\equiv 0$
is not conformally Einstein.
\end{theorem}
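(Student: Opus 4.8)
The plan is to work directly with the characterisation of conformally Einstein $pp$-waves provided by Proposition~\ref{prop21} and its Corollary~\ref{prop21cor}, specialised to $n=4$. First I would pass to the complex coordinates $z,\bar z$ and translate the two equations (\ref{pp212}) and (\ref{pp213}) into this notation for the function $h=\bar z\alpha+z\bar\alpha$ (taking $\beta\equiv 0$). In these coordinates $\D = 2\del_z\del_{\bar z}$, so $\D h = \del_z\alpha+\del_{\bar z}\bar\alpha =: f+\bar f$ where $f:=\del_z\alpha$, and $\D^2 h=0$ automatically since $f$ is holomorphic in $z$; the second-order derivatives are $\del_z^2 h=\bar z\,\del_z^2\alpha$ (which is the only genuinely non-constant Hessian entry, up to conjugation) and $\del_z\del_{\bar z}h=\del_z\alpha+\del_{\bar z}\bar\alpha$. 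The conformal-to-Cotton-flat claim for $\beta\equiv 0$ should follow by exhibiting an explicit scaling, or more simply by noting that for $g_4$ as in (\ref{confcott}) the Cotton tensor components (\ref{ppcotton}) are $\del_i\D h/(n-2)$, and one checks that condition (\ref{212}), $C+W(Y,\cdot,\cdot,\cdot)=0$, i.e.\ equation (\ref{pp212}), admits a solution $Y$ of gradient type even when (\ref{pp213}) does not --- so the metric is conformally Cotton-flat even though it may not be conformally Einstein.

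Next, for the three-way equivalence, the implications (1)$\Rightarrow$(2) and (2)$\Rightarrow$(3) are essentially immediate: if $\del_z^2\alpha\equiv 0$ then the Hessian of $h$ is constant and a direct check with (\ref{ppweyl}) shows the Weyl tensor vanishes, so $g_4$ is conformally flat; and conformally flat trivially implies conformally Einstein (the flat metric is Einstein). The substantive implication is (3)$\Rightarrow$(1): assuming $g_4$ is conformally Einstein and \emph{not} conformally flat, I must derive $\del_z^2\alpha\equiv 0$, which contradicts the non-flatness, forcing the conclusion that conformally Einstein already implies conformally flat, hence (1). So I would argue by contradiction: suppose $g_4$ is conformally Einstein but $\del_z^2\alpha\not\equiv 0$ on some open set. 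By Corollary~\ref{prop21cor} the component $Y^n=\der r(Y)$ is independent of $r$, and by (\ref{duy}) we have $Y^{n-1}=\der u(Y)=0$; so $Y = Y^z\del_z + Y^{\bar z}\del_{\bar z} + Y^n\del_r$ with $Y^z,Y^{\bar z}$ functions of $(z,\bar z,u)$ only. I then feed this into the complexified versions of (\ref{pp212}) and (\ref{pp213}).

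The heart of the argument is analysing these two equations. Equation (\ref{pp212}) in complex form reads, schematically, $\del_z(f+\bar f) - Y^z(f+\bar f) + 2\,(Y^z\,\del_z^2 h + Y^{\bar z}\,\del_z\del_{\bar z}h)=0$ together with its conjugate; here $\del_z^2 h=\bar z\,\del_z^2\alpha$ and $\del_z\del_{\bar z}h = f+\bar f$, so one equation involves the product $Y^z\bar z\,\del_z^2\alpha$. Since $\alpha,f$ are holomorphic in $z$ while $Y^z$ need not be, one exploits the $\bar z$-dependence: differentiating in $\bar z$ and using holomorphicity of $f$ and $\alpha$ in $z$ isolates terms, and the explicit $\bar z$ prefactor on $\del_z^2\alpha$ is the key structural feature that forces relations between $Y^z$ and $Y^{\bar z}$. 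Then equation (\ref{pp213}), which for $n=4$ loses its $\D h$ term and becomes $\D^2 h + 2(n-2)\sum Y^kY^l\del_k\del_l h=0$, i.e.\ (since $\D^2h=0$) simply $Y^z Y^z\,\del_z^2 h + 2Y^zY^{\bar z}\,\del_z\del_{\bar z}h + Y^{\bar z}Y^{\bar z}\,\del_{\bar z}^2 h = 0$, gives a quadratic constraint on $(Y^z,Y^{\bar z})$. Combining the linear relations from (\ref{pp212}) with this quadratic one, together with the integrability of $Y$ as a gradient ($\del_{\bar z}Y^z = \del_z Y^{\bar z}$, etc.), should over-determine the system and force $\bar z\,\del_z^2\alpha$-type terms to vanish, i.e.\ $\del_z^2\alpha\equiv 0$.

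The main obstacle I anticipate is precisely this last combinatorial step: juggling the linear equations (\ref{pp212}), their complex conjugates, the quadratic equation (\ref{pp213}), and the gradient-integrability conditions without drowning in the algebra, while correctly using the holomorphicity of $\alpha$ in $z$ to separate variables. One has to be careful that $Y^z$ and $Y^{\bar z}$ are a priori arbitrary smooth functions (not holomorphic), so the deductions must come from matching coefficients of independent functions of $\bar z$, not from naive cancellation. A clean way to organise it may be to first show $Y^z$ must itself be holomorphic in $z$ (forced by (\ref{pp212}) once $\del_z^2\alpha\neq 0$ is assumed), then reduce (\ref{pp213}) to a polynomial identity in $\bar z$ whose coefficients must all vanish, yielding $\del_z^2\alpha\equiv 0$ --- the desired contradiction. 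The final sentence of the theorem, that $\del_z^2\alpha\not\equiv 0$ implies not conformally Einstein, is then just the contrapositive of (3)$\Rightarrow$(1), so nothing extra is needed.
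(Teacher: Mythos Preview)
Your plan has a genuine gap at the crucial step. You claim that for $n=4$ equation~(\ref{pp213}) ``loses its $\D h$ term and becomes $\D^2 h + 2(n-2)\sum Y^kY^l\del_k\del_l h=0$,'' and you intend to use this as a quadratic constraint on $(Y^z,Y^{\bar z})$. But look again at (\ref{pp213}): both the second \emph{and} the third term carry a factor $(n-4)$. For $n=4$ the equation collapses to $\D^2h=0$, which is exactly the Bach-flat condition and is satisfied automatically by hypothesis. So (\ref{pp213}) gives you no information whatsoever, and the ``quadratic constraint'' you plan to combine with (\ref{pp212}) simply does not exist. In fact the paper remarks explicitly that these metrics satisfy \emph{both} necessary conditions (\ref{212}) and (\ref{213}) for a gradient $Y$ and are still not conformally Einstein; this is precisely why they are interesting examples. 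You cannot reach a contradiction using only (\ref{pp212}) and (\ref{pp213}).

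What the paper does instead is the following. Equation~(\ref{pp212}) is solved completely: assuming $\del_z^2\alpha\not\equiv 0$, the $z$- and $\bar z$-components of (\ref{pp212}) combine to force $Y^z=-1/\bar z$, $Y^{\bar z}=-1/z$, and then the gradient condition fixes the $\del_r$-component to be a function of $u$ alone. This already proves the conformally-Cotton-flat claim, since (\ref{212}) has a gradient solution. For the implication (3)$\Rightarrow$(2) one must now return to the \emph{original} conformally Einstein condition (\ref{ppconfeinstein}), namely that $\ro-\nabla Y^\flat+(Y^\flat)^2$ be pure trace, and plug in the explicit $Y$ just found. A direct computation shows that the $\der z\,\der\bar z$-component of this tensor is $2/(z\bar z)\neq 0$ while the $\der r\,\der u$-component vanishes identically, so the tensor cannot be proportional to $g_4$. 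That is the contradiction. Your proposal never invokes (\ref{ppconfeinstein}) directly, and without it the argument cannot close.
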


\begin{proof}
First we trivially get 
that in the complex coordinates $(z,\bar{z})$ we
have: $\D h=2\left( \del_z \alpha
+\del_{\bar{z}}\bar{\alpha}\right)$. 
Next, using   
$$
\del_1 = \frac{1}{\sqrt{2}}\left(\del_z +\del_{\bar{z}}\right),\ \ 
\del_2 = \frac{\mathrm{i}}{\sqrt{2}}\left(\del_z -\del_{\bar{z}}\right),
$$
in the formula (\ref{ppweyl}) we see that the Weyl tensor vanishes if and only if $\del_z^2\alpha=0$. This proves the equivalence of (1) and (2).

For the remaining statements we try to find a vector field $Y$ that solves the necessary condition (\ref{212})
for $g$ to be conformally
Einstein. We use this equation in the form (\ref{pp212}), as in
Proposition \ref{prop21}. Recall that in this proposition we proved
that such a vector does not have a $\del_u$-component. Thus we look
for $Y$ of the form 
$$
Y =   F\del_z+ \overline{F}\del_{\bar{z}} + f \del_r
$$
 where $F=F(z,\bar{z},r,u)$ is a complex and $f=f(z,\bar{z},r,u)$ is
a real function.
Eq. (\ref{pp212}) gives
\begin{eqnarray}
\label{1}0&=&
\del^2_z \alpha \left( 1+ \bar{z} F\right) + \del^2_{\bar{z}}\bar{\alpha}\left( 1+ z\overline{F}\right)
\\
\label{2}0&=&
\del^2_z \alpha \left( 1+ \bar{z}F\right) - \del^2_{\bar{z}}\bar{\alpha}\left( 1+ z\overline{F}\right)
\end{eqnarray}
which immediately implies 
$$\del^2_z \alpha \left( 1+ \bar{z} F\right)=0.$$
Assuming that $g_4$ is not conformally flat, i.e. $\del_z^2\alpha\not\equiv 0$ we get
$$ F(z)=- 1/\bar{z}.$$
Thus we found that the vector $Y$ solves (\ref{212}) if and
only if 
$Y = -\frac{1}{\bar{z}}\del_z - \frac{1}{z}\del_{\bar{z}}+f\del_r$.
Now, $g_4$ is conformally Cotton-flat if we find $f$ such that  this $Y$ is a gradient.  Setting
$$Y^\flat=g_4(Y,.)=  -\frac{1}{{z}}\der z -\frac{1}{\bar{z}}\der \bar{z}+f\der u,$$
we see that $Y$ is locally a gradient, i.e. $\d Y^\flat=0$, if and only if $f$ is a
function of variable $u$ alone. Every $f=f(u)$ gives a solution to the
conformally Cotton equation.

To prove that (3) implies (2), assume that 
 $g_4$ is not conformally flat but conformally Einstein.
 Then we plug in the vector $Y^\flat$ we have obtained as a solution of Eq. (\ref{pp212}), and its corresponding 
 $$
\nabla Y^\flat 
=
\der f \otimes \der u - \left( \frac{\alpha + z\del_{\bar{z}}\bar{\alpha}}{\bar{z}} + \frac{\bar{\alpha} + \bar{z}\del_{{z}}{\alpha}}{{z}}\right) \der u^2 
+\frac{1}{{z}^2}\der z^2+\frac{1}{\bar{z}^2}\der \bar{z}^2
$$
into
$$
\ro -\nabla Y^\flat +(Y^\flat)^2.
$$
According to Equation (\ref{ppconfeinstein}) this must be a pure
trace, if the metric $g_4$ is conformally Einstein. But this can not
happen since $\ro -\nabla Y^\flat +(Y^\flat)^2$ has a
nowhere vanishing  $\der z\der \bar{z}$-term given by
$\frac{2}{z\bar{z}}\der z\der \bar{z}$, and an 
identically vanishing $\der r \der u$-term. Thus $\ro -\nabla
Y^\flat +(Y^\flat)^2$ is never proportional to $g_4$,
which in turn, can not be conformally Einstein.
\end{proof}
In the light of discussions in
 \cite{gover/nurowski04}, the metrics (\ref{confcott}) provide interesting examples because, apart from being Bach-flat, they are conformally Cotton-flat, but {\em not} conformally Einstein even though the necessary conditions (\ref{212}) and (\ref{213}) are both satisfied for a gradient.

We strongly believe that a similar argument works in any dimension, even though one might  not be able to describe the functions with $\D^{s}h=0$. But under certain assumptions it might be possible to deduce a contradiction between Eq.'s (\ref{pp212}) -- (\ref{pp213}) and the fact that  the function $\der r(Y)$ is independent of the $r$-coordinate as it occurs for $n=4$.

We want to conclude this section by returning to the result of Brinkmann in \cite{brinkmann25} mentioned in the introduction. If a $4$-dimensional $pp$-wave is Einstein, and hence Ricci-flat, the function $h$ is given by $\alpha+\overline{\alpha}$ for a holomorphic function $\alpha$.  Again, this metric is conformally flat if and only if $\del_z^2\alpha=0$. If it is not conformally flat but conformally Einstein, then the vector field $Y$ is  null and a multiple of $\del_r$, namely $Y=f\del_r$ with a function $f=f(u)$ that depends on the variable $u$ only. As $\ro=0$, Equation \ref{ppconfeinstein} then is equivalent to $\dot{f}=f^2$. Hence, any such function yields a conformal rescaling of a Ricci-flat $pp$-wave to another Einstein  metric that is in fact Ricci-flat. The new metric may  be isometric to the the original one but in general this is not the case (see also \cite{ehlers-kundt62}).
 
\section{The critical $Q$-curvature of a $pp$-wave}
\label{qcurv}
For a semi-Riemannian manifold of $(M,g)$ even dimension $n=2s$, in  \cite{branson93}
 T. Branson introduced  a series $\{Q_{2k}\}_{k=1\ldots s}$ of scalar invariants constructed from the curvature tensor involving $2k$ derivatives of the metric\footnote{Regarding this section, we would like to thank Andreas Juhl for explaining to us some facts about $Q$-curvature.}. 
As such, for a $pp$-wave all $Q_{2k}$ are zero. This follows form the general fact, that all scalar invariants constructed from the Riemannian curvature tensor of a $pp$-waves vanish (for a proof in arbitrary dimension see \cite{CMPPPZ03}).
However,  as an application of Theorem \ref{theo1},
in this section we will use the $pp$-wave ambient metric in order to show that the {\em critical $Q$-curvature} $Q_n$ of a $pp$-wave vanishes.
The so-called {\em subcritical} $Q$-curvatures $Q_2, \ldots , Q_{n-2}$ are defined by the inhomogeneous part of the GJMS-operators $P_{2k}$, namely
\[
P^g_{2k}(1)=(s-k) Q_{2k}.
\]
The GJMS-operators $P_{2k}$  introduced in \cite{gjms} are conformally covariant operators. We will not give a definition of the {\em critical} $Q$-curvature $Q_n$ here (please refer to \cite{fefferman/hirachi03}, for example).
Instead we will explain a formula for the critical  $Q$-curvature given in \cite{graham-juhl07} that expresses it in terms of the volume of the Poincar\'e metric. 

Let $(M,[g])$ be a smooth  manifold of even dimension $n=2s$ with conformal class $[g]$. To this manifold one can assign a Poincar\'e metric $g_+$. $g_+$ is a metric on  $M_+=M\times (0,a)$ given by
\[g_+ = \frac{1}{x^2}\left( \der x^2 + g_x\right)\]
where $g_x$ is a $1$-parameter family of metrics with the same signature as $g$ and with initial condition $g_0=g$ such that $g_+$ is asymptotically Einstein, which means that 
$Ric(g_+)+ng_+$ vanishes up to terms of order $(n-2)$ in $x$. The Poincar\'e-metric is unique up to addition of terms of the form $x^n S_x$ where $S_x$ is a $1$-parameter family of symmetric $(2,0)$-tensors such that $S_0$ is trace-free.
(for details see \cite{fefferman/graham85,fefferman-graham07}). For a Poincar\'e metric one can show, see  \cite{graham99} for details, that  $\sqrt{\det(g_x)/\det(g)}$ has the Taylor expansion
\be\label{volume}
\sqrt{\frac{\det(g_x)}{\det(g)}}= 1+ v^{(2)}x^2 + v^{(4)}x^4+ \ldots + v^{(n-2)}x^{n-2}+ v^{(n)}x^n + \ldots \ee
defining smooth functions $v^{(2k)}$. Then in \cite{graham-juhl07} it is shown that the critical  $Q$-curvature $Q_n$  of $(M,[g])$ is  given as
\be\label{qform}
2nc_{\frac{n}{2}}Q_n= nv^{(n)} + \sum_{k=1}^{s-1} (n-2k) \mathcal A_{2k}^* v^{(n-2k)}.\ee
Here $\mathcal A_{2k}$ are the  linear differential operators that appear in the expansion of a harmonic function for a Poincar\'e-metric, the star denotes the formal adjoint, and $c_{\frac{n}{2}}$ is a constant.

Furthermore, one has to recall how the Poincar\'e-metric can be obtained by the ambient metric. Assume that $$\tg=2\der (\rho t)\der t+t^2g(\rho)$$ is a pre-ambient metric for $[g]$ that is Ricci-flat up to terms of order $s$ and higher. Such a metric always exists and is unique up to terms of order $n/2$ in $\rho$.  Now, on
$$M_+=\{ (\rho,p,t) \in \widetilde{M} \mid p\in M, t^2\rho=-1\},$$
the Poincar\'e-metric is given by
$$ g_+= \frac{1}{x^2}\left( \der x^2 +\frac{1}{2} g(x^2)\right).$$	
Note that if the pre-ambient metric is Ricci-flat, then the Poincar\'e-metric obtained in this way is Einstein. We can use the ambient metric of a $pp$-wave  to prove
\begin{theorem}
The critical $Q$-curvature of an even-dimensional  $pp$-wave vanishes.
\end{theorem}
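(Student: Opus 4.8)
The plan is to feed the explicit ambient metric of Theorem~\ref{theo1} into the Graham--Juhl formula (\ref{qform}). Let $g$ be an $n=2s$-dimensional $pp$-wave. By Theorem~\ref{theo1}, or --- when the Fefferman--Graham obstruction $\D^s h$ does not vanish --- by the series (\ref{sol}) truncated at $k=s-1$ (which is well defined for any $h$ and still produces a pre-ambient metric that is Ricci-flat to the order required in \cite{graham-juhl07}), the (pre-)ambient metric in normal form is $\tg = 2\der(\rho t)\der t + t^2 g(\rho)$ with
\[
g(\rho) = g + H(\rho)\,\der u^2, \qquad H(\rho)=\sum_{k=1}^{s-1}\frac{\D^k h}{k!\,p_k}\,\rho^k .
\]
Consequently, by the recipe recalled just before the theorem, the associated Poincar\'e metric is $g_+ = x^{-2}\bigl(\der x^2 + \tfrac12 g(x^2)\bigr)$, so the one-parameter family entering the volume expansion (\ref{volume}) is $g_x=\tfrac12\bigl(g + H(x^2)\,\der u^2\bigr)$.

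The key point I would then exploit is that the deformation $\der u^2$ is exactly $\kappa\otimes\kappa$, where $\kappa=K\hook g$ is the parallel \emph{null} one-form dual to $K=\del_r$. Since $g^{-1}(\kappa,\kappa)=g(K,K)=0$, the rank-one update formula for the determinant gives
\[
\det\bigl(g + H(x^2)\,\der u^2\bigr) = \det(g)\,\bigl(1 + H(x^2)\,g^{-1}(\kappa,\kappa)\bigr) = \det(g)
\]
for every $x$, so $\det g_x$ is constant in $x$. Hence the ratio $\det g_x/\det g_0$ equals $1$ identically, the Taylor expansion (\ref{volume}) reduces to the constant $1$, and \emph{all} volume coefficients vanish: $v^{(2)}=v^{(4)}=\dots=v^{(n)}=0$. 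Substituting into (\ref{qform}),
\[
2n\,c_{n/2}\,Q_n = n\,v^{(n)} + \sum_{k=1}^{s-1}(n-2k)\,\mathcal A_{2k}^{*}\,v^{(n-2k)} = 0 ,
\]
since each $v^{(n-2k)}=0$ and the operators $\mathcal A_{2k}^{*}$ are linear; as $c_{n/2}\neq 0$ this gives $Q_n=0$.

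The step I expect to require the most care --- and the only genuine obstacle --- is to make this argument watertight for \emph{every} $pp$-wave, in particular those with $\D^s h\neq 0$ for which no Ricci-flat ambient metric exists, and to be sure that the coefficient $v^{(n)}$ read off above is really the one entering (\ref{qform}). For the first point one checks that the truncated metric above still produces, via the construction recalled before the theorem, a Poincar\'e metric $g_+$ that is asymptotically Einstein to order $n-2$ in the sense of \cite{graham-juhl07}. For the second, one uses that a Poincar\'e metric is ambiguous only by a term $x^n S_x$ with $S_0$ trace-free, which leaves $\det g_x$ unchanged through order $x^n$; thus $v^{(n)}$ --- equivalently, the trace part of the $x^n$-coefficient of $g_x$ --- is unambiguous and equals $0$ for our family. (The harmless overall factor $\tfrac12$ relating $g_x$ to $g(x^2)$ cancels in the ratio $\det g_x/\det g_0$, and in any case a constant rescaling of the conformal representative only multiplies $Q_n$ by a constant.) Everything else is the one-line determinant identity together with the observation --- used repeatedly in the paper --- that a $pp$-wave deformation is a null rank-one perturbation of the metric.
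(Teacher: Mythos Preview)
Your proposal is correct and follows essentially the same route as the paper: both pass from the explicit (pre-)ambient metric to the Poincar\'e family $g_x$, observe that $\det g_x$ is independent of $x$ so that all volume coefficients $v^{(2k)}$ vanish, and conclude $Q_n=0$ from the Graham--Juhl formula~(\ref{qform}). The only cosmetic difference is that the paper verifies the constancy of the volume by writing out the volume form in the coframe~(\ref{ambcoframe}), whereas you use the rank-one determinant identity together with the nullity of $\kappa$ --- equivalent one-line justifications of the same fact.
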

\begin{proof}Let $(M,g)$ be a $pp$-wave of even dimension $n=2s$. In Section \ref{ppambient} we have also shown that its pre-ambient metric  that is Ricci-flat up to terms of order $n/2$ is given by
formula 
(\ref{ppans}) with $H$ as in (\ref{sol}). 
Using the coframe in (\ref{ambcoframe}) we can write down the volume form $\w(\rho)$ of the $\rho$-dependend family of $pp$-waves
$$g(\rho) = 2\der u ~
\big(\der r+(h+H)\der u\big)~+~\sum_{i=1}^{n-2}(\der x^i)^2,
$$ namely
$$ \w(\rho)= \der x^1\wedge \ldots \wedge \der x^{n-2}\wedge \left( 
\der r+(h+H)\der u\right)\wedge \der u
=\w(0).$$
For the family $g_x=\frac{1}{2} g(x^2)$ defining the Poincar\'e metric this implies 
 that $\det(g_x)=\det (g_0)$. Hence, all the $v^{(2k)}$ in (\ref{volume}) are zero and so is the critical  $Q$-curvature by the result of \cite{graham-juhl07} given in formulae (\ref{qform}).
\end{proof}
Recall that for a $pp$-wave $(M,g)$ the vanishing of the scalar curvature implies that the Laplacian $\D_g$ is conformally covariant.
Calculations using formulae  in \cite{juhl08} show that  the first GJMS-operators $P_2$, $P_4$ and $P_6$  are equal to the corresponding powers of the Laplacian $\D_g$, $\D_g^2$ and $\D_g^3$. We conjecture that for $pp$-waves this is also the case for the higher $P_{2k}$.

\section{Conformal and ambient holonomy}
\label{holsec}

We conclude with a brief remark about the holonomy of the ambient metric and the holonomy of the normal conformal Cartan connection, also called the {\em conformal holonomy}, of a $pp$-wave. Holonomy groups describe the reduction of generic structures down to more special structures, in the semi-Riemannian, the conformal, and in other geometric settings. For a conformal manifold of signature $(r,s)$ the conformal holonomy is contained in $\mathrm{SO(r+1,s+1)}$. If it is a proper subgroup, then the conformal structure is reduced to a more special structure. Examples are Lorentzian Fefferman spaces, for an overview see \cite{baum07}, where the conformal holonomy reduces to the special unitary group, or conformal structures in signature $(2,3)$ with non-compact $G_2$ as structure group, \cite{nurowski04,nurowski07}.

In \cite{leistner05a} it is proven that the conformal holonomy of an $n$-dimensional Lorentzian conformal class that is given by a metric with parallel null line and totally null Ricci tensor is contained in the stabiliser 
in $\mathrm{SO(2,n)}$ of a totally null plane $\mathcal N$. Of course, $pp$-waves are special examples of such metrics and hence, their conformal holonomy reduces to this stabiliser. But we get the same result also for the holonomy of the ambient metric of  a $pp$-wave.
\bs
The metric $\ol{g}$ defined in Eq. (\ref{ppans}) admits a holonomy invariant distribution of totally null planes ${\mathcal N}$ spanned by $\del_r $ and $\del_\rho$. In particular, all curvature operators $\bar{R}(V,W)$,  $V,W\in T\bar{M}$, leave invariant the fibres of ${\mathcal N}$ and of ${\mathcal N}^\bot$, which is spanned by $\del_r$, $\del_\rho$, and $\del_i$.
\es

\bprf
The easiest way to see this is to consider the dual frame to the co-frame in (\ref {ambcoframe}) given by
$$
E_0=\frac{1}{t}\del_\rho,\  E_i=\frac{1}{t}\del_i,\  E_{n-1}= \frac{1}{t^2}\del_r,\  E_n=\del_u -
(h+H)\del_r,\ E^{n+1}=\del_t -\frac{\rho}{t}\del_\rho.
$$
Using the relation 
$\bar{g}(\bar{\nabla} E_\mu,E_\nu)= \Gamma_{\mu \nu}$ one can read off from the formulae for the connection $1$-forms in (\ref{gammaij}) that
$$ \mathcal N =\mathrm{span}(E_0,E_{n-1})=\left( \mathrm{span}(E_0,E_i,E_{n-1})\right)^\bot
$$ is invariant under the Levi-Civita connection.
\eprf
\begin{corollary}
Let $G$ be the holonomy group of the ambient metric of a $pp$-wave  in odd dimension or in dimension $2s$ with $\D^s h=0$. Then $G$
 is contained in the stabiliser in $\mathrm{SO(2,n)}$ of a totally null plane in $\rr^{2,n}$.
 \end{corollary}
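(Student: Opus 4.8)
The plan is to deduce the corollary directly from the preceding Proposition together with Theorem \ref{theo1}. First I would observe that by Theorem \ref{theo1} (equivalently, the Proposition of Section \ref{ppambient}), in odd dimension for arbitrary $h$, and in dimension $n=2s$ precisely when $\D^s h=0$, the metric $\ol{g}$ of Eq.~(\ref{ppans}) with $H$ given by (\ref{sol}) is Ricci-flat and, by the uniqueness part of the Fefferman-Graham Theorem \ref{fg}, is (a representative of) the ambient metric of the conformal $pp$-wave. So it suffices to analyse the holonomy of this explicit $\ol{g}$, which is exactly what the Proposition does.

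Next I would invoke the Proposition: $\ol{g}$ carries a parallel (holonomy-invariant) distribution $\mathcal N=\mathrm{span}(\del_r,\del_\rho)$ of totally null $2$-planes, which coincides with $\mathcal N^\bot$ up to the $\del_i$-directions. A parallel distribution is preserved by the full holonomy representation, so the holonomy group $G\subset\mathrm{O}(2,n)$ of $\ol{g}$ is contained in the subgroup stabilising the totally null plane $\mathcal N_{\tilde x}$ at a chosen basepoint $\tilde x\in\widetilde M$; identifying $(T_{\tilde x}\widetilde M,\ol{g}_{\tilde x})\cong\rr^{2,n}$, this is the stabiliser of a totally null plane in $\rr^{2,n}$. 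Since $\ol{g}$ is Ricci-flat (hence its holonomy reduces further than a generic metric) and in particular the ambient space is connected and time- and space-orientable in the relevant sense, $G$ actually lies in $\mathrm{SO}(2,n)$, giving the claimed containment in the stabiliser in $\mathrm{SO}(2,n)$ of a totally null plane in $\rr^{2,n}$.

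The only mild subtlety — and the step I would be most careful about — is matching conventions: one must check that the ambient metric $\ol{g}$, of signature $(p+1,q+1)=(2,n)$ for a Lorentzian $pp$-wave of signature $(1,n-1)$, indeed has structure group $\mathrm{SO}(2,n)$ as in the statement of the corollary and in \cite{leistner05a}, and that ``holonomy-invariant distribution of totally null planes'' in the Proposition is read at the level of the holonomy group action on a single tangent space (parallel transport being flat-connection-preserving on this distribution). Both points are immediate from the Proposition's proof, which exhibits $\mathcal N$ as $\ol{\nabla}$-parallel. Thus the corollary follows with essentially no new computation: apply Theorem \ref{theo1} to identify $\ol{g}$ as the ambient metric, then apply the Proposition to read off the invariant totally null plane.

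\begin{proof}
By Theorem \ref{theo1} — equivalently, by the Proposition of Section \ref{ppambient} — in odd dimension for arbitrary analytic $h$, and in dimension $n=2s$ when $\D^s h=0$, the metric $\ol{g}$ of Eq.~(\ref{ppans}) with $H$ as in (\ref{sol}) is Ricci-flat, and by the uniqueness in Theorem \ref{fg} it represents the ambient metric of the conformal $pp$-wave. Its holonomy group $G$ is therefore the holonomy group of $\ol{g}$. By the preceding Proposition, $\ol{g}$ admits a $\ol{\nabla}$-parallel distribution $\mathcal N=\mathrm{span}(\del_r,\del_\rho)$ of totally null $2$-planes. A parallel distribution is invariant under the holonomy representation, so, fixing a basepoint and identifying the tangent space with $\rr^{2,n}$ via $\ol{g}$, the group $G\subset\mathrm{O}(2,n)$ stabilises the totally null plane $\mathcal N$. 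Since $\ol{g}$ is Ricci-flat and the ambient space is connected, $G\subset\mathrm{SO}(2,n)$; hence $G$ is contained in the stabiliser in $\mathrm{SO}(2,n)$ of a totally null plane in $\rr^{2,n}$.
\end{proof}
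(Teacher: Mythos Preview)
Your approach is correct and is exactly the one implicit in the paper: the corollary is stated there without separate proof, as an immediate consequence of the preceding Proposition together with Theorem~\ref{theo1}. Your argument---identify the ambient metric via Theorem~\ref{theo1}, then use the $\ol{\nabla}$-parallel totally null distribution $\mathcal N$ from the Proposition to conclude that the holonomy stabilises a totally null plane---is precisely what is intended.

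One small correction: the reduction from $\mathrm{O}(2,n)$ to $\mathrm{SO}(2,n)$ has nothing to do with Ricci-flatness. The holonomy lies in $\mathrm{SO}(2,n)$ because the ambient space $(-\epsilon,\epsilon)\times M\times(1-\delta,1+\delta)$ (with $M$ a coordinate patch) is orientable---equivalently, it is simply connected, so the holonomy coincides with the restricted holonomy and is therefore connected. You should replace ``Since $\ol{g}$ is Ricci-flat and the ambient space is connected'' with the orientability (or simple connectedness) of the ambient space.
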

In general, it is possible to show that the conformal holonomy is always contained in the ambient holonomy \cite{g2inprep}. For a conformal class with an Einstein-metric or a Ricci-flat metric both holonomy groups are the same \cite{leistner05a,leitner05}.  For a $pp$-wave, not necessarily conformal Einstein,  
we have just seen that both are contained in the isotropy group of a totally null  plane. Hence,  it is very likely that the conformal holonomy is actually {\em equal} to the ambient holonomy. But to give a proof of this is beyond the scope of this paper.


\def\cprime{$'$}

\end{document}